\documentclass[12pt,letterpaper,reqno]{amsart}
\usepackage[letterpaper,margin=1.2in,headheight=15pt]{geometry} 
\usepackage{graphicx,bbm}
\usepackage{amsmath, amssymb, amsfonts}
\usepackage[utf8]{inputenc}
\usepackage{epstopdf}
\usepackage{tikz-cd} 
\usepackage{setspace}
\usepackage{xcolor}
\usepackage{tcolorbox}
\usepackage[titletoc,title]{appendix}
\usepackage{enumitem}

\usepackage{hyperref}
\definecolor{darkred}{rgb}{0.5,0.15,0.15}
\hypersetup{colorlinks=true,urlcolor=darkred,linkcolor=darkred,citecolor=darkred}

\usepackage{mathtools}

\usepackage{cleveref} 

\newcommand{\ii}{\textnormal{i}}

\renewcommand{\ell}{X} 

\newcommand{\I}{{\mathrm i}}

\newcommand{\be}{\begin{eqnarray}}
\newcommand{\ee}{\end{eqnarray}}
\newcommand{\bea}{\begin{eqnarray}}
\newcommand{\eea}{\end{eqnarray}}

\newcommand{\ben}{\begin{eqnarray}}
\newcommand{\een}{\end{eqnarray}}

\theoremstyle{plain}
\newtheorem{thm}{Theorem}[section]
\newtheorem{prop}{Proposition}[section]

\newtheorem{introtheorem}{Theorem}

\theoremstyle{definition}
\newtheorem{dfn}{Definition}[section]

\newtheorem*{question*}{Question}

\theoremstyle{remark}
\newtheorem{rem}{Remark}[section]

\numberwithin{equation}{section}

\title[From Closed to Relative GW via Resurgent Functions]{From Closed to Relative Higher-Genus Gromov--Witten Invariants via Resurgent Functions    }

\author{Murad Alim}
\author{Noah Tischler}
\address{Maxwell Institute for Mathematical Sciences, Edinburgh EH14 4AS, UK\\
Department of Mathematics, Heriot-Watt University, Edinburgh EH14 4AP, UK\\
\small Department of Mathematics, Technical University of Munich, Boltzmannstr. 3, 85748, Garching, Germany}

\begin{document}

\begin{abstract}

Higher genus Gromov--Witten invariants of Calabi--Yau threefolds are encoded in a generating function which is an asymptotic series in a formal parameter $\lambda$. Using resurgence, analytic functions in this formal parameter were uncovered. 
In this paper we focus on the resolved conifold and study the enumerative meaning of the strong-coupling asymptotic expansion, in powers of $ 1/\lambda $, of the resurgent analytic functions. We show that this expansion contains both a closed curve-counting contribution in dual variables and a contribution governed by relative Gromov--Witten invariants, naturally interpreted in logarithmic geometry.

\end{abstract}
\maketitle

\tableofcontents
\section{Introduction}

Modern enumerative geometry and especially Gromov--Witten (GW) theory  has enormously benefited from interactions with physics. Using mirror symmetry of Calabi--Yau (CY) threefolds, which was conjectured within String Theory, the generating function of all degree rational curves on the quintic was obtained from a variation of Hodge structure computation on the mirror quintic \cite{Candelas:1990rm}. This triggered  broad interest within mathematics leading to mathematical proofs of these results \cite{LLY,GiventalMirror}. 

Mirror symmetry of CY threefolds matches flat connections constructed for mirror families of CY threefolds. On the B-side, the flat connection arises from the variation of Hodge structures of the middle-dimensional cohomology of a threefold $\check{X}$ over the moduli space of complex structures. On the A-side, which has direct relations to Gromov--Witten theory of the mirror CY $X$, the corresponding flat connection is provided by the quantum cohomology of the even part of the cohomology. In addition to being a powerful computational tool, this match of flat connections via mirror symmetry reveals a global structure of the generating functions of rational invariants, different regions in the underlying moduli space of complexified K\"ahler structures relate GW theories of topologically different CY manifolds and lead to orbifold GW invariants. See \cite{coxkatz,Hori:2003ic,ChenRuan,AbramovichLectures} and references therein for an exposition on these topics.

The generating function of higher genus GW invariants of a CY $X$ in turn is captured by the partition function of topological string theory. A recursive structure of the higher genus generating functions is provided by the holomorphic anomaly equations of the mirror CY $\check{X}$  \cite{Bershadsky:1993cx}, which give the higher genus generating functions an interpretation within geometric quantization \cite{Witten:1993ed}. 

The topological string partition function is given as an expansion in a formal parameter $\lambda$, the topological string coupling, summing over genera of Riemann surfaces. It has the form:
\begin{equation}
    Z_{top}(\lambda,t)= \exp \left( \sum_{g=0}^{\infty} \lambda^{2g-2} F^g(t)\right)\,,
\end{equation}
where $t\in \mathcal{M}$ denotes a set of local coordinates on the underlying moduli space of the CY family. The global structure on $\mathcal{M}$ of each $F^g(t)$ can be accessed through a Feynman diagram expansion presented in \cite{Bershadsky:1993cx}. An underlying polynomial structure expressing higher genus GW generating functions in terms of finitely many generators at genus zero was proven in \cite{Yamaguchi:2004bt} for the mirror quintic and generalized in \cite{Alim:2007qj} for arbitrary families of CY threefolds. This polynomial structure significantly enhances the computability of higher genus $F^g$, see e.g. \cite{Huang:2006hq} and allows one to access the global structure of the $F^{g}$'s on the moduli space $\mathcal{M}$, see e.g. \cite{Haghighat:2008gw,Alim:2008kp}. 

However, a global structure in $\lambda$ is not clear from the outset. Within the physical formulation $\lambda$ is considered a formal small expansion parameter, the recursive structure leads to an asymptotic series in $\lambda$, within the geometric quantization interpretation of \cite{Witten:1993ed}, this parameter is put in relation to the quantization parameter $\hbar$, which generically does not have a global structure. The question regarding the global structure in $\lambda$ is thus intimately related in the physical setup to a non-perturbative formulation of the theory. Within topological string theory, this question has been addressed in many works, see \cite{Marino:2024tbx} and references therein. Within enumerative geometry, clear indications of the relevance of a global structure in $\lambda$ have already been put forward in the works of Maulik, Nekrasov, Okounkov and Pandharipande \cite{MNOP1,MNOP2}, where a change of variables
$q=\exp(\ii \lambda)\,$
relates Gromov--Witten generating functions with certain counts of Donaldson--Thomas (DT) invariants on the same CY threefold.

In the present work, we focus on one particular promising approach to the global structure of higher genus Gromov--Witten theory, namely resurgence developed by \'Ecalle \cite{ecalle1981resurgent}; see also \cite{SauzinLectures} and references therein for an overview. Within resurgence one can construct systematically piecewise analytic functions from a given divergent series. The goal of our work is to study the enumerative geometric content of the global analytic structure associated to the formal higher genus counting parameter $\lambda$, focusing on the geometry of the resolved conifold where this question can be addressed and answered explicitly. In particular, we use the results of \cite{Alim:2021mhp}, where piecewise analytic functions were constructed using resurgence for the Gromov--Witten potential of the resolved conifold, following methods which were employed in the study of the quantum dilogarithm \cite{Garoufalidis:2020pax}. See also \cite{Pasquetti:2009jg,Hatsuda_2014} for earlier studies using resurgence for this geometry, and \cite{Grassi:2022zuk,2023Alim,alim2024nonperturbativetopologicalstringsresurgence} for related work. 

In \cite{Alim:2021mhp}, the authors gave a precise relation of the analytic functions and their Stokes discontinuities (jumps) to wall-crossing structures in Donaldson--Thomas theory via the relation to Bridgeland's Riemann-Hilbert problem in that context \cite{BridgelandCon}. Our goal here is to address another question, namely a dual asymptotic expansion of the aforementioned analytic functions in terms of $\lambda_D= \frac{4\pi^2}{\lambda}$. The Borel--Laplace resummation employed within resurgence gives an analytic function in $\lambda$ for different choices of half-planes in the complex $\lambda$ plane. A particular \emph{maximal} one gives an analytic function for the half-plane $\Re \lambda >0$, we will denote this by $F_{np}(\lambda,t)$ for the GW generating function, and by $Z_{np}(\lambda,t)=\exp F_{np}(\lambda,t)$ for the partition function. The latter has a product expansion which reads for the resolved conifold \cite{alim2024nonperturbativetopologicalstringsresurgence}:

\begin{equation}
Z^{con}_{np}(\lambda,t):= \prod_{m=0}^{\infty} (1-Q\, q^{m+1})^{m+1}  \cdot \exp\left(-\frac{1}{2\pi i} \textrm{Li}_2(Q' q'^m)\right) \cdot (1-Q' q'^m)^{-\frac{1}{\check{\lambda}}\left( t+m\right)}\,,
\end{equation}
where:
\begin{equation}
Q=\exp(2\pi i t),\quad q=e^{\ii\lambda}\,, \quad Q'=\exp(2\pi i t/\check{\lambda})\,, \quad q'=\exp(2\pi i/\check{\lambda}),\quad \check{\lambda}=\frac{\lambda}{2\pi}\,.
\end{equation}

The first part of this product 
\begin{equation}
Z_{GV}(\lambda,t)= \prod_{m=0}^{\infty} (1-Q\, q^{m+1})^{m+1} \,,
\end{equation}

corresponds to the relation between GW and DT put forward by MNOP \cite{MNOP1,MNOP2}. The parts of the product containing $Q'$ and $q'$ are however only accessible through the analytic structure in $\lambda$. These parts have been identified with a limit of the refined topological string partition function, namely the Nekrasov--Shatashvili \cite{NEKRASOV_2010} limit, guided by a general structure appearing in \cite{Hatsuda:2015owa,Grassi:2014zfa}.

In terms of the variables  \eqref{eq:dualvariables}
\begin{equation*} 
\lambda_D= \frac{4\pi^2}{\lambda},\, t_D=\frac{2\pi t}{\lambda} \quad \textrm{and} \quad \check{\lambda}_D= \frac{\lambda_D}{2\pi}=\frac{1}{\check{\lambda}}
\end{equation*} 
$F_{np}^{con}(\lambda,t )= \log Z_{np}^{con}(\lambda,t)$ can be written as \eqref{GVNS}
 \begin{equation*}
\widetilde{F}^{con}_{\text{np}}(\lambda,t)= F_{\text{GV}}(\lambda,t) + \frac{1}{2\pi} \frac{\partial}{\partial \lambda} \left(\lambda \, F_{\text{NS}} \left(\lambda_D,t_D - \frac{1}{2}\check{\lambda}_D\right)\right)\,,
\end{equation*}
where the second part can furthermore be written as a sum of three contributions \eqref{derivativeFNS}:

\begin{equation*}
\begin{split}
\frac{1}{2\pi} \frac{\partial}{\partial \lambda} \left(\lambda \, F_{\text{NS}} \left(\lambda_D,t_D - \frac{1}{2}\check{\lambda}_D\right)\right)
&= \frac{1}{2\pi} F_{\text{NS}} \left(\lambda_D,t_D - \frac{1}{2} \check{\lambda}_D\right) \\
&+ \frac{1}{2\pi} \lambda_D F_{GV} (\lambda_D,t_D) \\ &-\frac{1}{2\pi} t_D \partial_{t_D}F_{\text{NS}} \left(\lambda_D,t_D - \frac{1}{2}\check{\lambda}_D\right)\,.
\end{split}
\end{equation*}

Our work thus addresses the enumerative geometry and Gromov--Witten curve count of these three pieces of the potential. The GW content of this part is thus intimately related to the GW content of the NS limit of refined topological strings.

Recent work \cite{bousseau2020proofntakahashisconjecturemathbbp2e,Bousseau_2021, gräfnitz2025enumerativegeometryquantumperiods, brini2024refinedgromovwitteninvariants} studied the enumerative properties of the NS limit of the refined topological string on local Calabi--Yau threefolds and related it to log/relative curve counts on log Calabi--Yau pairs (or in a similar fashion via equivariant GW theory on the Calabi--Yau fivefold $\mathsf{Tot}_{S}\left( K_S \oplus \mathcal O_S \oplus \mathcal O_S\right)$ for $S$ a smooth toric del Pezzo).\par
The geometry of the NS limit of the refined topological string on a CY threefold became of interest in trying to understand more general curve/sheaf counting correspondences, predicted from the physics literature, an example being the prediction of quasi-modularity of $F_{\text{NS}}^g$ worked out in \cite{huang2010directintegrationgeneralomega} and proven for $\mathsf{Tot}_{\mathbb P^2}\left(\mathcal O(-3) \right)$ in \cite{Bousseau_2021}. Similarly, a closely related quantity to the NS limit free energy, the quantum $\mathsf A$-period, which one can think of as a quantum corrected mirror map, with refinement parameter $q\to 1$ recovering the usual mirror map, was studied in depth in \cite{gräfnitz2025enumerativegeometryquantumperiods}, where an enumerative meaning in terms of tropical curve counts was determined. The above results are established in the framework of logarithmic (log) geometry.\par
Based on the work \cite{Kato1989LogStructures}, which originates from ideas by various mathematicians including Deligne, Faltings, Illusie and Kato, see \cite{logintroduction} for a historical account, log geometry was discovered as a tool to make sense of geometric questions in mildly singular spaces. The interest in log GW invariants finds its origins in the Gross-Siebert program (see \cite{gross2006mirror, gross2010mirror, gross2013logarithmic,gross2015mirrorsymmetrylogcalabiyau,gross2016intrinsic} as well as \cite{chen2010logarithmic, abramovich2010logarithmic} and references therein), an algebraic geometric incarnation of mirror symmetry.

It was then realized \cite{abramovich2013comparisontheoremsgromovwitteninvariants} that log curve counts are the objects unifying the various notions of relative GW theories established at the time, see \cite{li1998symplectic, li2001stable,ionel2003relative}. The idea is that in Gromov--Witten theory when working relative to a divisor i.e. imposing tangency conditions, the log geometric description gives a natural compactification of the moduli space of stable maps with specified contact profile to the divisor. 

In this work we are interested in studying the enumerative properties of the non-perturbative free energy $\widetilde{F}_{\text{np}}(\lambda, t)$ and therefore the NS limit of the free energy of the resolved conifold in particular. We then establish the relationship between relative curve counts on $\mathbb P^1$ with maximal tangency to a divisor $D\subset \mathbb P^1$ and the NS contribution to the large $\lambda$ regime of the analytic function $\widetilde{F}_{\text{np}}(\lambda, t)$ obtained by resurgent analysis on the closed GW generating function.

The global analytic object $\widetilde{F}_{\text{np}}(\lambda, t)$ containing both closed curve counting invariants as well as contributions from the NS limit which we will identify with relative GW invariants in appropriate asymptotic regimes seems to suggest an interplay between two a priori very different curve counting theories: closed- and relative curve counts. \par
It is this global phenomenon which we examine in this work for the resolved conifold. The main goal is to give an enumerative interpretation of the strong coupling asymptotics of $\widetilde{F}^{con}_{\mathrm{np}}(\lambda, t)$ in terms of log/relative GW invariants of $\mathbb P^1 \setminus \infty$.

\subsection{Relation to other works}
We proceed with an overview of works that relate to our setup. The main idea of giving an enumerative interpretation of the strong coupling asymptotics of $\widetilde{F}^{con}_{\mathrm{np}}(\lambda, t)$ was inspired by the recent developments of the understanding of the NS limit in terms of logarithmic/relative invariants for log pairs consisting of a surface and an anticanonical divisor.
\\

In the work \cite{van2019local}  the authors prove the relation between local- and relative/logarithmic invariants in genus zero. This relates to our work in the following way. Namely our result from Theorem \ref{theorem A} is a $q$-deformation of the main theorem \cite[Theorem 1.1]{van2019local}, where the relationship between the two (virtual) enumerative invariants is proven at the level of virtual fundamental cycles.
\\

Furthermore, the relationship between local- and log- and open invariants in higher genus was also studied in \cite{bousseau2024stable}, giving an intricate network of relationships between a priori very different curve- and sheaf counting theories. Specifically, the relationship between open- and log invariants is explored, which is important for our work. We study an Aganagic--Vafa brane with framing zero on an outer toric leg. The machinery developed in \cite{bousseau2024stable} relates invariants of our setup to maximal tangency relative GW invariants of the Looijenga pair $(\mathbb F_1, D=D_1+D_2)$, where $D= H + (2 H- E)$. Here $H$ is the pullback of the hyperplane class along $\pi : \mathbb F_1 \cong \mathsf{Bl}_{p}\mathbb P^2 \to \mathbb P^2$ and $E$ is the exceptional divisor of the blow-up at the torus fixed point $p\subset \mathbb P^2$. Then these invariants in the NS limit would correspond to not iterated $q$-log derivatives but log derivatives w.r.t. the K\"ahler parameters of $D$.
\\

In \cite{bousseau2020proofntakahashisconjecturemathbbp2e} Bousseau establishes that refined sheaf counts on $\mathbb P^2$ are related to relative GW invariants relative to a smooth, anticanonical divisor after a change of variable. The refined sheaf invariants are defined as
\begin{equation}
    \Omega^{\mathbb P^2}_d \left( q^{1/2}\right) \coloneqq (-q^{1/2})^{-(d^2+1)} \sum_{j=0}^{d^2+1}\mathsf{I b}_{2j}\left( M_{d,\chi}\right) q^j \in \mathbb Z[ q^{\pm 1/2}]
\end{equation}
where\footnote{In \cite{Maulik_2023} it is proven that the invariants $ \Omega_d \left( q^{1/2}\right)$ are independent of the numerical invariant $\chi$.} $M_{d,\chi}$ is the $\dim_{\mathbb C}M_{d,\chi} = d^2 +1$ dimensional Le Potier moduli space of Gieseker semistable $\mathcal F$ sheaves on $\mathbb P^2$ at fixed numerical invariants $d \coloneqq \deg \left(\mathsf{supp}(\mathcal F) \right)$ and $\chi \coloneqq \chi(\mathcal F)$ and by $\mathsf{I b}_{2j}$ we mean the intersection Betti numbers (for generic $d, \chi$ $M_{d, \chi}$ is singular). Furthermore in \cite[Theorem 1.12]{bousseau2020proofntakahashisconjecturemathbbp2e} Bousseau establishes that one can express the NS limit of the refined topological string free energy on local $\mathbb P^2$ (i.e. $\mathsf{Tot}_{\mathbb P^2}\left( \mathcal O(-3)\right)$ which is evidently CY) in terms of the relative GW invariants
\begin{equation}
    F_{NS}^{loc.\mathbb P^2}(\epsilon, Q) \coloneqq  \sum_{g \in \mathbb Z_{\geq 0}} \sum_{d \in \mathbb Z_{\geq 1}} \frac{(-1)^{d-1}}{3d}N_{g,d}^{\mathbb P^2 \setminus E} Q^d \epsilon^{2g-1}
\end{equation}
with maximal contact order $d$ to a smooth elliptic curve on $\mathbb P^2$. This may be viewed as a mathematically rigorous definition of the NS free energy in the local del Pezzo surface setting.
\par
Then there is a correspondence between $ \Omega^{\mathbb P^2}_d \left( q^{1/2}\right)$ and  $F_{NS}^{loc.\mathbb P^2}(\epsilon, Q) $ given by
\begin{equation}
   \sum_{g \in \mathbb Z_{\geq 0}} \sum_{d \in \mathbb Z_{\geq 1}} \frac{(-1)^{d-1}}{3d}N_{g,d}^{\mathbb P^2 \setminus E} Q^d \epsilon^{2g-1} \overset{q = \exp(-i \epsilon)}{=}\\
   i \sum_{d \in \mathbb Z_{\geq 1}} \sum_{l \in \mathbb Z_{\geq 1}} \frac{1}{l^2} \frac{\Omega_d^{\mathbb P^2} \left( q^{1/2}\right)}{q^{l/2}- q^{-l/2}} Q^{l d}.
\end{equation}
\\

This correspondence between refined sheaf counting and GW invariants is also explored further in \cite{brini2024refinedgromovwitteninvariants} from the angle of equivariant GW theory of certain CY fivefolds.
There, Brini and Sch\"uler establish \cite[Theorem A1]{brini2024refinedgromovwitteninvariants}:
Let $T$ be a torus acting antidiagonally on the fibers of each $\mathcal O(-1) \oplus \mathcal O$ summand. Then the $T$-equivariant GW invariants of the resolved conifold are
\begin{equation}
    GW_{d}\left( \mathsf{Tot}_{\mathbb P^1}( \mathcal O(-1)^{\oplus 2} \oplus \mathcal O^{\oplus 2}), T\right) = \frac{1}{d}\frac{1}{(q_1^{d/2}- q_1^{-d/2})(q_2^{d/2} - q_2^{-d/2})}
\end{equation}
where $q_i = \exp(-\sqrt{-1} \epsilon_i)$ for $i \in \{1,2\}$. From this, one obtains
\begin{equation}
    \Omega_{d}^{\mathbb P^1}(q^{1/2}) = \delta_{d,1}
\end{equation}
where $\Omega_{d}^{\mathbb P^1}(q^{1/2})$ denotes the Poincar\'e polynomial of the moduli space of Gieseker semistable sheaves on $\mathbb P^1$ (with Euler characteristic one).
\\

Lastly we note that the generating function of the relative GW invariants of $\mathbb P^1 \setminus \infty$ was first computed in \cite{faber2003relativemapstautologicalclasses}. Here we use the technology developed by Faber-Pandharipande \cite{Faber, faber2003relativemapstautologicalclasses} as well as Bryan-Pandharipande \cite{bryan2004curvescalabiyau3foldstopological, bryan2006localgromovwittentheorycurves} to obtain the relative invariants:
\begin{equation*}
    N^{\mathbb P^1 \setminus \infty}_{g,d} =(-1)^{d-1}d^{2g-2} b_g = \begin{cases}
    \frac{(-1)^{d-1}}{d^2},& g=0\\
        (-1)^{d-1} d^{2g-2} \frac{2^{2g-1} -1}{2^{2g-1}} \frac{\vert B_{2g}\vert}{(2g)!}, & g \in \mathbb Z_{\geq 1}
        \end{cases}
\end{equation*}
where we note that we define in Section \ref{sec:relative gw invariants} the invariants $N^{\mathbb P^1 \setminus \infty}_{g,d}$ as in \eqref{relativeGW computation}, so with insertion of the obstruction bundle
\begin{equation}
    V \coloneqq R^1 \pi_{\ast} \left( \mathcal O_{\mathcal C}\oplus \mathsf f^\ast \mathcal O_{\mathbb P^1}(-1)\right)\,
\end{equation}
where $\pi: \mathcal C \to \overline{\mathcal M}^{\dagger}_{g,n}\left( \mu^1\right)$ is the respective universal curve with $\mathsf f:\mathcal C \to \mathbb P^1$.

\subsection{Results}
Our aim in this work is to explore a similar description of the NS limit and its enumerative interpretation for the resolved conifold for two reasons:
\begin{enumerate}
    \item The technology used in \cite{bousseau2020proofntakahashisconjecturemathbbp2e} for $\mathbb P^2 \setminus E$ is not (immediately) applicable to the resolved conifold. In principle, the resolved conifold is substantially simpler to treat than the local surface CYs. While \cite{brini2024refinedgromovwitteninvariants} do tackle the problem of local $\mathbb P^1$ geometries with full refinement, i.e. $ \varepsilon_1 $ and $ \varepsilon_2 $ are both nonzero and not specialized to the unrefined or NS limits, we come from different considerations. We are concerned with elaborating on the enumerative geometry of the non-perturbative free energy of the resolved conifold, of which a constituent is the NS free energy. 
    \item While the NS limit is  known up to all orders by work of \cite{Aganagic_2012}, in which they compute the free energy by identifying a spectral curve associated to the resolved conifold, quantizing this curve and using the WKB method to compute the generating function genus by genus recursively, we are interested in the apparent duality suggested by resurgence: We obtain the NS generating function (given in terms of relative GW invariants) purely from considerations about the closed GW generating function. We then give explicit computations of the NS free energy as well as the derivation of the relative invariants and compare the generating functions.
\end{enumerate}
\par

Both the free energy in the Nekrasov--Shatashvili limit (\cite{Aganagic_2012}) as well as the Hodge integrals (\cite{faber2003relativemapstautologicalclasses}) have been previously computed. It is the identification of the latter
as the strong-coupling enumerative sector of the resurgent analytic completion constructed from only the resolved conifold’s topological string free energy that we study. Thus a single analytic function
interpolates, through either small- or large string-coupling asymptotic expansions, between closed and
relative curve-counting theories.

\begin{introtheorem}[= Theorem $4.1$]\label{theorem A}
    Fix the notation $Q_T=\exp(2\pi iT)$ and $q'=\exp(i\lambda_D)$. 
Then the Nekrasov--Shatashvili free energy of the resolved conifold satisfies
\begin{equation}
F_{\mathrm{NS}}(\lambda_D,T) = \sum_{g\geq 0} \sum_{d\geq 1} \frac{1}{d} N^{\mathbb{P}^1\setminus\infty}_{g,d} (-Q_T)^d \lambda_D^{2g-1}.
\end{equation}
In particular, evaluating at the shifted dual Kähler parameter $T=T_D$, the relative Gromov--Witten invariants of $\mathbb{P}^1$ with maximal tangency to the divisor $\infty$ give the Nekrasov--Shatashvili contribution appearing in the dual expansion of $\widetilde F^{con}_{\mathrm{np}}(\lambda,t)$.

Equivalently, after the GV-type resummation
one obtains
\begin{equation}
F^{GV}_{\mathrm{NS}}(q',T) = -i \sum_{k\geq 1}
\frac{Q_T^k}{k^2(q'^{k/2}-q'^{-k/2})}.
\end{equation}
Consequently, we obtain for the Poincar\'e polynomials of the moduli space of (degree $d$) Gieseker semistable sheaves on $\mathbb P^1$
\begin{equation}
\Omega_d^{\mathbb{P}^1}(q'^{1/2})=\delta_{d,1}.
\end{equation}
    
\end{introtheorem}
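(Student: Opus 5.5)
The plan is to compare both sides term by term as power series in $Q_T$ and $\lambda_D$. I would take as input the closed form of $F_{\mathrm{NS}}$ obtained from the product formula for $Z^{con}_{np}$, i.e. the NS free energy of the resolved conifold as it appears in the decomposition \eqref{GVNS}, together with the relative invariants $N^{\mathbb P^1\setminus\infty}_{g,d}=(-1)^{d-1}d^{2g-2}b_g$ quoted in the introduction from the Faber--Pandharipande and Bryan--Pandharipande computations.

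\textbf{Step 1: the GV-type form of $F_{\mathrm{NS}}$.} From the factors $\exp\left(-\frac{1}{2\pi i}\mathrm{Li}_2(Q'q'^m)\right)$ and $(1-Q'q'^m)^{-\frac{1}{\check\lambda}(t+m)}$, I would expand the logarithms as power series in $Q'$ and sum the geometric series in $m$. Matching with the definition of $F_{\mathrm{NS}}(\lambda_D,T)$ in \eqref{GVNS}, with $T=t_D-\tfrac12\check\lambda_D$, should give
\[
F^{GV}_{\mathrm{NS}}(q',T)=-i\sum_{k\ge1}\frac{Q_T^k}{k^2(q'^{k/2}-q'^{-k/2})}.
\]
The shift by $\tfrac12\check\lambda_D$ is exactly what produces the symmetric denominator $q'^{k/2}-q'^{-k/2}$ rather than $1-q'^k$. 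I would double-check this against the Aganagic et al.\ all-orders formula.

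\textbf{Step 2: expansion in $\lambda_D$.} Write $q'^{k/2}-q'^{-k/2}=2i\sin(k\lambda_D/2)$ and use the standard expansion
\[
\frac{x/2}{\sin(x/2)}=\sum_{g\ge0}\frac{(2^{2g-1}-1)|B_{2g}|}{2^{2g-1}(2g)!}x^{2g}=\sum_{g\ge0}b_g x^{2g}, \qquad b_0=1.
\]
This gives
\[
F^{GV}_{\mathrm{NS}}=-\sum_k \frac{Q_T^k}{k^3}\sum_g b_g k^{2g}\lambda_D^{2g-1}.
\]
On the other side,
\[
\frac1d N_{g,d}(-Q_T)^d=\frac{1}{d}(-1)^{d-1}d^{2g-2}b_g(-1)^dQ_T^d=-d^{2g-3}b_gQ_T^d.
\]
The two expansions therefore agree coefficientwise, which proves the first identity and its equivalence with the resummed form.

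\textbf{Step 3: the sheaf-theoretic statement.} Compare with the Bousseau-type correspondence
\[
i\sum_{d,l}\frac{1}{l^2}\frac{\Omega_d(q^{1/2})}{q^{l/2}-q^{-l/2}}Q^{ld},
\]
whose sign convention is adjusted for $q'=e^{i\lambda_D}$ versus $e^{-i\epsilon}$. Matching coefficients of $Q^n$ gives, for every $n$,
\[
\sum_{ld=n}\frac{\Omega_d(q'^{1/2})}{l^2\,[l]_{q'}}=\frac{1}{n^2\,[n]_{q'}}, \qquad [l]_{q'}=q'^{l/2}-q'^{-l/2}.
\]
Möbius-type induction on $n$ then forces $\Omega_1=1$ and $\Omega_d=0$ for $d>1$.

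\textbf{Main obstacle.} I expect the hardest part to be Step 1. The difficulty lies in correctly tracking the shift $T=t_D-\tfrac12\check\lambda_D$, the normalization $\frac{1}{2\pi}\partial_\lambda(\lambda\,\cdot)$, and the overall signs and factors of $i$ and $2\pi$ in extracting $F_{\mathrm{NS}}$ from the product, since the remaining steps are routine series manipulations.
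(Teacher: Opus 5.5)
Your proof is correct. Steps 2 and 3 are essentially what the paper does: it substitutes $N^{\mathbb P^1\setminus\infty}_{g,d}=(-1)^{d-1}d^{2g-2}b_g$, resums with $\sum_g b_g x^{2g}=\frac{x/2}{\sin(x/2)}$ to get $-\frac12\sum_d Q_T^d/(d^2\sin(d\lambda_D/2))$, and reads off $\Omega_d^{\mathbb P^1}=\delta_{d,1}$. Your triangular induction on $n=ld$ makes that last step more explicit than the paper, which only says that just degree one contributes.

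The genuine difference is Step 1, which the paper never performs. There, $F_{\mathrm{NS}}$ is by definition the NS limit \eqref{FNSgenfunction} of the refined GV free energy, already in the closed form $-\frac12\sum_k Q^k/(k^2\sin(k\epsilon/2))$. So the GV-type expression is just the one-line identity $2i\sin(k\lambda_D/2)=q'^{k/2}-q'^{-k/2}$. The link to $\widetilde F^{con}_{\mathrm{np}}$ is checked only at the end: $Q_{T_D}=Q'q'^{-1/2}$ turns $-i\sum_k Q_{T_D}^k/(k^2(q'^{k/2}-q'^{-k/2}))$ into the term $i\sum_m Q'^m/(m^2(1-q'^m))$ of \eqref{Fnp-decomp}.

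Your route does work. Expanding the $Q'$-factors of $Z^{con}_{np}$ gives $-\frac{1}{2\pi i}\partial_\lambda\bigl(\lambda\sum_m Q'^m/(m^2(1-q'^m))\bigr)$, and you then invert \eqref{GVNS}. Its merit is that it ties the relative invariants directly to the resurgent function, so the ``in particular'' clause comes for free. It carries two costs the paper avoids:
\begin{enumerate}
\item \eqref{GVNS} determines $F_{\mathrm{NS}}$ only modulo the kernel of $\partial_\lambda(\lambda\,\cdot)$ at fixed $t$, i.e. terms $C(t)/\lambda$. You must fix this by a normalization, e.g. a series in $Q_T$ with no constant term.
\item To call the extracted function the Nekrasov--Shatashvili free energy, you still need its identification with the NS limit of the refined string. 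That is precisely \eqref{FNSgenfunction}, which is your proposed check against Aganagic et al.
\end{enumerate}
So what you flag as the main obstacle is, in the paper's logic, a rewriting of the definition.

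One small notational point: the paper's multiple-cover expansion has $\Omega_d^{\mathbb P^1}(q'^{k/2})$ in the $k$-th term, not $\Omega_d(q'^{1/2})$ as in your matching equation. Your induction goes through unchanged: for $n>1$ the only surviving term with $l>1$ is the $d=1$ one, and $\Omega_1=1$ is constant.
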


\subsection{Outline}
The paper is organized as follows. 
Our objective is to show an interplay of curve counting theories for the resolved conifold, which manifests itself in the form of different asymptotic behaviors of a global object, an analytic function which we call the non-perturbative free energy.

\underline{In Section \ref{sec:freeenergies}}, we will review the basic notions of closed GW theory for a CY threefold. We then specialize the discussion to the conifold. 

\underline{In Section \ref{sec:analytic structure}} we summarize the main results of the resurgence analysis of the closed GW invariants generating function, following \cite{Alim:2021mhp} and references therein. This gives us an analytic function in the string coupling $\widetilde{F}_{\text{np}}^{con}(\lambda, t)$ in particular on the distinguished half-plane $ \Re \lambda>0 $, with values depending on the Kähler parameter.

\underline{In Section \ref{sec:relative gw invariants}} our main objective is to compute the log/relative invariants for the resolved conifold. To do this, we will recall basic facts necessary to follow the localization computation done in \cite{bryan2004curvescalabiyau3foldstopological} in order to compute the relative invariants $N^{\mathbb P^1 \setminus \infty}_{g,d}$. To conclude this section, we then show how the previously computed invariants relate to the NS free energy and therefore the large coupling asymptotics of the analytic partition function.

\underline{In Section \ref{sec:outlook}}, we end on a summary of the ideas presented as well as further interesting directions which have not yet been studied systematically, especially beyond the resolved conifold geometry.

\subsection*{Acknowledgments}
It is a pleasure to thank Andrea Brini, Pierrick Bousseau and Valdo Tatitscheff for discussions related to this work as well as giving invaluable feedback on earlier versions of this draft. The work of NT is supported by the EPSRC Centre for Doctoral Training in Algebra, Geometry and Quantum Fields.

\section{Topological strings and enumerative geometry}\label{sec:freeenergies}
To a mirror family of CY threefolds, topological string theory associates the topological string partition function which is defined as an asymptotic series in the topological string coupling $\lambda$, summing over the free energies $ \mathcal{F}^{g}(t)$ associated to Riemann surfaces of genus $g$:
\begin{equation}
Z_{top} (\lambda,t)= \exp \left(\sum_{g=0}^{\infty} \lambda^{2g-2} \mathcal{F}^{g}(t)\right)\,,
\end{equation}
where $t=(t^1,\dots,t^n)$ is a set of distinguished local coordinates on the underlying moduli space $\mathcal{M}_{\text{K\"ahler}}$, which is of dimension $n=h^{1,1}(X_t)=h^{2,1}(\check{X}_{t(z)})$. $X_t$ and $\check{X}_{t(z)}$ are a mirror pair of CY threefolds which correspond to the A-model and B-model sides of mirror symmetry. 


\subsection{The Gromov--Witten potential}
In a certain limit together with an expansion around a distinguished large volume point in the moduli space $\mathcal{M}_{\text{K\"ahler}}$, the topological string free energies $\mathcal{F}^{g}(t)$ become the generating functions of higher genus Gromov--Witten invariants $F^g(t)$ on the A-model side of mirror symmetry. The GW potential of $X$ is the following formal power series:
\begin{equation}
F_{GW}(\lambda,t) = \sum_{g\ge 0}  \lambda^{2g-2} F^g(t)= \sum_{g\ge 0}  \lambda^{2g-2} \sum_{\beta\in H_2(X,\mathbb{Z})}  [\textrm{GW}]_{\beta,g} \,Q^{\beta}\, ,
\end{equation}
where $Q^{\beta} \coloneqq \exp (2\pi \I t^{\beta} )$ is a formal variable and $[\textrm{GW}]_{\beta,g}$ are the Gromov--Witten invariants associated to a curve of genus $g$ and class $\beta$. Let $C_a \in H_{2}(X,\mathbb{Z}), a=1,\dots,n=h^{1,1}(X)$ be a set of curve classes spanning $H_2(X,\mathbb{Z})$, then in general all curve classes $\beta$ can be written as $\beta=\sum_{a=1}^{n} d_a C_a$, where the integers $d_a$ are the degrees of the curve. The formal variables $t^{\beta}$ become $t^{\beta}=\sum_{a=1}^{n} d_a t^a$ and the $t^a$ correspond to local coordinates on the moduli space of complexified K\"ahler forms of the underlying CY $X$. Bearing this in mind, we will in the following treat $t^{\beta}$ as independent formal variables for the curve classes $\beta$ for ease of exposition of the general results.

The GW potential can furthermore be written as:
\begin{equation}
F_{GW}=F_c + \tilde{F}_{GW}\,,
\end{equation}
where $F_c$ denotes the contribution from constant maps and $ \tilde{F}_{GW}$ the contribution from non-constant maps. The constant-map contributions at genera 0 and 1 are $ t $-dependent and the higher genus constant map contributions take the universal form \cite{Faber}:
\begin{equation}
F_c^g\coloneqq \frac{\chi(X)(-1)^{g-1}\, B_{2g}\, B_{2g-2}}{4g (2g-2)\, (2g-2)!}\,, \quad g\ge2\,,
\end{equation}
where $\chi(X)$ is the Euler characteristic of $X$ and the Bernoulli numbers $B_n$ are generated by:
\begin{equation}
\frac{w}{e^w-1} = \sum_{n=0}^{\infty} B_n \frac{w^n}{n!}\,.
\end{equation}

The Gopakumar--Vafa (GV) resummation of the GW potential \cite{Gopakumar:1998ii,Gopakumar:1998jq} reformulates the non-constant part of the GW potential in terms of the Gopakumar--Vafa invariants  $[\textrm{GV}]_{\beta,g} \in \mathbb{Z}$ which are given by a count of electrically charged $\mathsf M2$ branes in an M-theory setup. The GW potential can thus be written as:
\begin{equation}\label{GVresum}
F_{GV}(\lambda,t)= \sum_{\beta>0}\sum_{g\ge 0} [\textrm{GV}]_{\beta,g}\, \sum_{k\ge 1} \frac{1}{k} \left( 2 \sin \left( \frac{k\lambda}{2}\right)\right)^{2g-2} Q^{k\beta}\, .
\end{equation}
In particular $F_{GV}$ has the same asymptotic expansion at $\lambda=0$ as $\tilde{F}_{GW}$
and 
$$ \tilde{F}_{GW}^{g=0}(t)=\sum_{\beta>0} [\textrm{GV}]_{\beta,0}\, \textrm{Li}_3(Q^{\beta})\,, \quad Q^{\beta}= \exp(2\pi i t^{\beta})\,.$$


\subsection{GW potential of the resolved conifold}
An example of a non-compact CY manifold $X$  for which the asymptotic series is known to all orders is the 
resolved conifold which is given by the total space of the rank-two bundle over the projective line:
\begin{equation}
X \coloneqq \mathcal{O}_{\mathbb P^1}(-1) \oplus \mathcal{O}_{\mathbb P^1}(-1) \rightarrow \mathbb{P}^1\,,
\end{equation}
and corresponds to the resolution of the conifold singularity.\\ 

The GW potential for this geometry was determined in physics \cite{Gopakumar:1998ii,GV}, and in mathematics \cite{Faber} with the following outcome for the non-constant maps:
\begin{equation}\label{resconfree}
\widetilde{F}^{con}(\lambda,t)= \sum_{g=0}^{\infty} \lambda^{2g-2} \widetilde{F}^g(t)= \frac{1}{\lambda^2} \mathrm{Li}_{3}(Q)+\frac{B_2}{2}\mathrm{Li}_1(Q)+ \sum_{g=2}^{\infty} \lambda^{2g-2} \frac{(-1)^{g-1}B_{2g}}{2g (2g-2)!} \mathrm{Li}_{3-2g} (Q) \, ,
\end{equation}
using the notation $Q=e^{2\pi\ii t}$. 
The GV resummation in this case gives:
\begin{equation}
F_{GV}(\lambda,t)=\sum_{k=1}^\infty\frac{e^{2\pi \I k t}}{k\big(2\sin\big(\frac{\lambda k}{2}\big)\big)^2}\,.
\end{equation}


\section{Analytic structure in the topological string coupling} \label{sec:analytic structure}

\subsection{The resolved conifold}\label{subsec:rescon}

The starting point is the formal series giving the non-constant and non-classical piece of the Gromov--Witten potential or the topological string free energy of the resolved conifold:
\begin{align*}\label{formal}
\widetilde{F}^{con}(\lambda,t)&=  \frac{1}{\lambda^2} \mathrm{Li}_{3}(Q)+\frac{B_2}{2}\mathrm{Li}_1(Q) + \sum_{g=2}^{\infty} \lambda^{2g-2} \frac{(-1)^{g-1}B_{2g}}{2g (2g-2)!}\, \mathrm{Li}_{3-2g} (Q) \,  \\
&=\frac{1}{\lambda^2} \mathrm{Li}_{3}(Q)  +\frac{B_2}{2}\mathrm{Li}_1(Q)+ \Phi^{con}(\check{\lambda},t)\,, \quad \check{\lambda}=\frac{\lambda}{2\pi}\,, \quad Q=e^{2\pi \I t}\,. 
\end{align*} 

In \cite{alim2020difference} following methods of \cite{Iwaki2}, it was proven that this series satisfies the following difference equation:
\begin{equation}\label{eq:diffeq}
\widetilde{F}^{con}\left(\lambda,t+\check{\lambda}\right) + \widetilde{F}^{con}\left(\lambda,t-\check{\lambda}\right) - 2 \widetilde{F}^{con}\left(\lambda,t\right)=-\textrm{Li}_1(Q) \,, \quad \check{\lambda}=\frac{\lambda}{2\pi}\,.\\
\end{equation}

A solution in terms of the triple sine function of this difference equation was found \cite{Alim:2021lld}:
\begin{equation}
\begin{split}
\widetilde{F}^{con}_{\text{np}}(\lambda,t) := \log \left( \mathcal{S}_3(t | \check{\lambda},1)\right)\,,
\end{split}
\end{equation}
where 
$$ 
    \mathcal{S}_3(z\, | \, \omega_1,\omega_2) := \exp\left(\frac{\pi \I}{6} \cdot B_{3,3}(z+\omega_1\,|\,\omega_1,\omega_1,\omega_2)\right) \cdot \sin_3(z+\omega_1\, |\, \omega_1,\omega_1,\omega_2),$$
and the relevant definitions for the Bernoulli polynomials and the triple sine function can be found in \cite{Alim:2021lld}. The non-perturbative content of this solution was analyzed in \cite{Alim:2021ukq} and in \cite{Alim:2021mhp}. In \cite{2023Alim} it was furthermore shown that $\widetilde{F}^{con}_{\text{np}}(\lambda,t)$ satisfies a further difference equation given by:
\begin{align}\label{eq:diffeq2}
    \widetilde{F}_{\text{np}}^{con}(\lambda,t+1)-\widetilde{F}_{\text{np}}^{con}(\lambda,t)=\frac{1}{2\pi \I} \frac{\partial}{\partial \check{\lambda}} \left(  \check{\lambda} \, \textrm{Li}_2(e^{2\pi \I t/\check{\lambda}})\right)\,.
\end{align}

$\widetilde{F}^{con}_{\text{np}}(\lambda,t)$  was identified in \cite{Alim:2021mhp} as the Borel summation of the asymptotic series along a distinguished ray on the real axis in the Borel plane, furthermore the inhomogeneous piece of the difference equation  \eqref{eq:diffeq2} corresponds to the Stokes jumps of the Borel summation of the asymptotic series. $\widetilde{F}^{con}_{\text{np}}(\lambda,t)$ could furthermore be decomposed in the following way, see \cite{Alim:2021ukq,Alim:2021mhp}:
\begin{equation}\label{Fnp-decomp}
\widetilde{F}^{con}_{\text{np}}(\lambda,t)= \sum_{k=1}^\infty\frac{e^{2\pi \I k t}}{k\big(2\sin\big(\frac{\lambda k}{2}\big)\big)^2}  -\frac{1}{2\pi \I }\partial_{\lambda}\left(\lambda \sum_{m=1}^{\infty}\frac{Q'^m}{m^2(1-q'^m)}\right)\,,
\end{equation}

where  $Q'=\exp(2\pi i t/\check{\lambda})\,, \quad q'=\exp(2\pi i/\check{\lambda})$\,.
Based on the results of \cite{Alim:2021mhp}, a product structure was given in \cite{alim2024nonperturbativetopologicalstringsresurgence}

\begin{align}
Z^{con}_{np}(\lambda,t)&= \prod_{m=0}^{\infty} \left(1-Q\, q^{m+1}\right)^{m+1}  \cdot \exp\left(-\frac{1}{2\pi i} \textrm{Li}_2\left(Q' q'^m\right)\right) \cdot \left(1-Q' q'^m\right)^{-\frac{1}{\check{\lambda}}\left( t+m\right)}\,
\end{align}
where: $Q=\exp(2\pi i t),\quad q=e^{\ii\lambda}\,, \quad Q'=\exp(2\pi i t/\check{\lambda})\,, \quad q'=\exp(2\pi i/\check{\lambda})$\,.

\subsection{Nekrasov--Shatashvili limit}

A refinement of topological strings can be introduced, see \cite{Iqbal_2009} and references therein, which in the case of the resolved conifold gives:

\begin{equation}
F^{\textrm{ref}}_{\text{GV}}(\epsilon_1,\epsilon_2,t)= -\sum_{k=1}^{\infty} \frac{Q^k}{k(2\sin \left(k\epsilon_1/2\right))(2\sin \left(k\epsilon_2/2\right))}\,\,.
\end{equation}

\begin{itemize}

\item In the limit $\lim_{\epsilon_1=-\epsilon_2 = \lambda} F^{\textrm{ref}}_{\text{GV}}(\epsilon_1,\epsilon_2,t)$,  the refined free energy reduces to $F_{GV}(\lambda,t)$. 

\item The Nekrasov--Shatashvili (NS) limit \cite{NEKRASOV_2010} of the refined topological string is given by

 \begin{align}\label{FNSgenfunction}
   F^{\text{NS}}(\epsilon,t)&:= \lim_{\epsilon_2\rightarrow 0} \epsilon_2\cdot F^{\textrm{ref}}_{\textrm{GV}}(\epsilon_1,\epsilon_2,t)|_{\epsilon_1=\epsilon}\, =- \frac{1}{2} \sum_{k=1}^{\infty} \frac{Q^k}{k^2 \sin(k\epsilon/2)}
   \end{align}

\end{itemize}

The analytic function obtained from the Borel summation \eqref{Fnp-decomp}
in terms of the variables 
\begin{equation} \label{eq:dualvariables}
\lambda_D= \frac{4\pi^2}{\lambda},\, t_D=\frac{2\pi t}{\lambda} \quad \textrm{and} \quad \check{\lambda}_D= \frac{\lambda_D}{2\pi}=\frac{1}{\check{\lambda}}
\end{equation} 
can thus be cast in the following form, see \cite{Alim:2021ukq,Alim:2021mhp}:

 \begin{equation}\label{GVNS}
\widetilde{F}^{con}_{\text{np}}(\lambda,t)= F_{\text{GV}}(\lambda,t) + \frac{1}{2\pi} \frac{\partial}{\partial \lambda} \left(\lambda \, F_{\text{NS}} \left(\lambda_D,t_D - \frac{1}{2}\check{\lambda}_D\right)\right)\,,
\end{equation}
we use 
\begin{equation}
    \lambda \partial_{\lambda}= -t_D \partial_{t_{D}} - \lambda_D \partial_{\lambda_D}
\end{equation}

to write the second part of \eqref{Fnp-decomp} and \eqref{GVNS} as a sum of three contributions:

\begin{equation}
\begin{split}\label{derivativeFNS}
\frac{1}{2\pi} \frac{\partial}{\partial \lambda} \left(\lambda \, F_{\text{NS}} \left(\lambda_D,t_D - \frac{1}{2}\check{\lambda}_D\right)\right)
&= \frac{1}{2\pi} F_{\text{NS}} \left(\lambda_D,t_D - \frac{1}{2} \check{\lambda}_D\right) \\
&+ \frac{1}{2\pi} \lambda_D F_{GV} (\lambda_D,t_D) \\ &-\frac{1}{2\pi} t_D \partial_{t_D}F_{\text{NS}} \left(\lambda_D,t_D - \frac{1}{2}\check{\lambda}_D\right)\,.
\end{split}
\end{equation}

The enumerative geometric content of \eqref{derivativeFNS} is therefore a combination of three constituents. The first term on the right-hand side is the Nekrasov--Shatashvili free energy, which we will show to be (related to) the generating function of relative Gromov--Witten invariants of $\mathbb P^1 $ relative to the point $\infty$. The second constituent is the usual closed Gromov--Witten theory, in the dual variables $\lambda_D$ and $t_D$ with an additional prefactor of $\lambda_D$. Lastly, the third term of the expression can be identified with the quantum $\mathsf B$-period associated to the resolved conifold using quantum special geometry relations \cite{Aganagic_2012}, with an additional prefactor $t_D$ in front.

The asymptotic expansion near $\lambda_D=0$ of the three constituents is given by:

\begin{equation}
\begin{split}\label{derivativeFNSasympt}
\frac{1}{2\pi} \frac{\partial}{\partial \lambda} \left(\lambda \, F_{\text{NS}} \left(\lambda_D,t_D - \frac{1}{2}\check{\lambda}_D\right)\right)
&\sim \frac{1}{2\pi i} \sum_{m=0}^{\infty} (i \lambda_D)^{m-1} \frac{B_m}{m!} \operatorname{Li}_{3-m} (Q')
\\
&+ \frac{1}{2\pi}\left( \frac{1}{\lambda_D} \mathrm{Li}_{3}(Q')+ \sum_{g=1}^{\infty} \lambda_D^{2g-1} \frac{(-1)^{g-1}B_{2g}}{2g (2g-2)!} \mathrm{Li}_{3-2g} (Q')\right)
\\ &-i  t_D  \sum_{m=0}^{\infty} (i \lambda_D)^{m-1} \frac{B_m}{m!} \operatorname{Li}_{2-m} (Q')\,.
\end{split}
\end{equation}
where we have used
\begin{equation}
    \frac{1}{2\pi i} \partial_{t_D} \operatorname{Li}_{m}(Q')= Q' \partial_{Q'} \operatorname{Li}_{m}(Q')= \operatorname{Li}_{m-1}(Q')\,.
\end{equation}

\section{Relative Gromov--Witten invariants of the conifold}\label{sec:relative gw invariants}
Our main goal is to show a duality naturally arising in the context of resurgence  between two a priori very different curve counting theories associated to the resolved conifold in different regimes of $\lambda$ of the analytic object $\widetilde{F}^{con}_{\text{np}}(\lambda,t)$: closed- and relative GW theory. In order to establish this duality, we will compute relative curve counts for a geometry associated to the resolved conifold first. This will then allow us to compare the results to what we expect from the non-perturbative object obtained via resurgence on the closed GW generating function.\par
A reason for expecting an interplay of different curve counting theories comes from physics. Given a CY threefold one expects a relation between open GW counts and closed GW counts of a dual theory, see for instance \cite[Section 6]{Alim:2021mhp}. It is precisely this understanding which naturally leads to questions about relative GW theory which can be seen as an algebraic approach to open GW theory, a perspective first taken in \cite{Li_2006}.\par
The main objective of this section is to make the reader familiar with the general setup of relative curve counting on $\mathbb P^1$. Furthermore, we will touch on the refined topological string free energy and its relation to relative invariants, based on discussions in \cite{Bousseau_2021} and \cite{brini2024refinedgromovwitteninvariants}.\par
In order to compute the generating function of these invariants, we will give a brief exposition of the work presented in \cite{faber2003relativemapstautologicalclasses, Li_2006} which discusses the moduli space of stable relative maps and its intersection theory. \par
\subsection{Basic properties of \texorpdfstring{$\overline{\mathcal M}_{g,n}$}{Mg,n}}
Let us first introduce the natural classes appearing in the various flavors of GW theory. From now on denote by $\mathcal{M}_{g,n}$ the moduli space of smooth curves of genus $g \in \mathbb Z_{\geq 0}$ and $n \in \mathbb Z_{\geq 0}$ marked points. By $\overline{\mathcal{M}}_{g,n}$ we mean the Deligne-Mumford compactification of $\mathcal M_{g,n}$, see \cite{vakil2003moduli} and references therein. Restrict from now on to the stable range $2g-2+n>0$ unless specified otherwise. It is well known that 
\begin{equation}
    \dim_{\mathbb C} \overline{\mathcal M}_{g,n} = 3g -3 + n,
\end{equation}
and that $\overline{\mathcal M}_{g,n}$ comes with a fundamental cycle
\begin{equation}
    \left[\overline{\mathcal M}_{g,n} \right] \in A_{3g-3+n}\left( \overline{\mathcal M}_{g,n}\right).
\end{equation}\par
\begin{dfn}[$\psi$-classes]\label{psiclasses definition}
    Let $\overline{\mathcal M}_{g,n}$ be the moduli space of Deligne-Mumford stable curves of genus $g \in \mathbb Z_{\geq 0}$ and $n \in \mathbb Z_{\geq 0}$ marked points. Consider the $n$ line bundles $\mathbb L_1 , \dots , \mathbb L_n$ on $\overline{\mathcal M}_{g,n}$, where $\mathbb L_i$ has fiber at the point $[C, p_1, \dots , p_n] \in \overline{\mathcal M}_{g,n}$ the one-dimensional complex cotangent space of $C$ at the marked point $p_i$. We will denote the Chern classes of said line bundles by
    \begin{equation}
        \psi_i := \mathsf c_1 \left( \mathbb L_i\right) \in A^1 \left( \overline{\mathcal M}_{g,n}\right).
    \end{equation}
\end{dfn}
\begin{dfn}[$\lambda$-classes]\label{lambdaclasses definition}
    Let $\overline{\mathcal M}_{g,n}$ be the moduli space of Deligne-Mumford stable curves of genus $g$ with $n$ marked points. Given the universal curve $\pi: \overline{\mathcal C}\to \overline{\mathcal M}_{g,n} $, the Hodge bundle $\mathbb E$ is the rank $g$
    vector bundle on $\overline{\mathcal M}_{g,n}$ whose fiber over a stable curve $[C, p_1, \dots, p_n] \in \overline{\mathcal M}_{g,n}$ is the vector space of global sections of the relative dualizing sheaf $\omega_{\overline{\mathcal C}\vert _C \setminus \overline{\mathcal M}_{g,n}}$. 
    We denote the $i$th Chern class of $\mathbb E$ by
    \begin{equation}
        \lambda_i := \mathsf c_i \left( \mathbb E\right) \in A^i \left( \overline{\mathcal M}_{g,n}\right).
    \end{equation}
\end{dfn}\par
These classes were first considered in \cite{Mumford1983} for the $n=0$ case. Less technically speaking, the $\lambda_i$ class keeps track of degenerations of global holomorphic differentials.

\subsection{Relative GW for conifold}\label{relativeGWsubsection}
As established in \cite{bousseau2020proofntakahashisconjecturemathbbp2e} for the log CY pair $(\mathbb P^2, D) $ for $D$ a smooth elliptic curve, and expanded upon in \cite{brini2024refinedgromovwitteninvariants} to all pairs which are del Pezzo surfaces $S$ with anticanonical divisors $D$, the free energy in the Nekrasov--Shatashvili limit $F_{\mathrm{NS}}\left( \epsilon , t\right)$ for $X =\mathrm{Tot}_S \left( K_S\right)$ is the generating function of the log (i.e. relative) GW invariants of the log Calabi--Yau pair $(S, D)$.

The local $\mathbb P^1$ case is different in nature from local surface cases previously studied, as the only compact components of $X$ will be of complex dimension one, so a divisor is a formal sum of points.

Nonetheless, we find a similar relation between the generating function of log/relative curve counts on $(\mathbb P^1, D)$ and the Nekrasov--Shatashvili free energy on local $\mathbb P^1$ geometries analogous to the one expected from the local surface pair, for instance $(\mathbb P^2, D)$, examined in \cite{bousseau2020proofntakahashisconjecturemathbbp2e, Bousseau_2021} and \cite{brini2024refinedgromovwitteninvariants}. 

As noted in the beginning of this section, the resolved conifold requires special treatment as it is a rank two holomorphic bundle over $\mathbb P^1$. We will now give an exposition of the work presented in \cite{Faber,faber2003relativemapstautologicalclasses, bryan2004curvescalabiyau3foldstopological}.

The relative GW invariants we want to compute are the numbers associated to virtual holomorphic counts with target $\mathbb P^1$ and maximal contact order, i.e. relative with the profile $(d)$ to the point $\infty$. This problem has already been considered in physics in \cite{Ooguri_2000} and later on in algebraic geometry in \cite{Katz_2006, Li_2009} to make sense of open topological string amplitudes. In \cite{Li_2006} it has then been argued that the relative (hence log by \cite{abramovich2013comparisontheoremsgromovwitteninvariants}) GW theory is an algebraic definition of such open curve counts
\begin{equation}
    \mu : (C, \partial C) \to (X, L)\,
\end{equation}
    where $C$ is a once punctured compact Riemann surface and $L$ is some Lagrangian\footnote{$L$ does not have to be Lagrangian necessarily, see the discussion in \cite{cecotti2009bpswallcrossingtopological} as well as \cite{Bousseau_2020}, where an analogous observation in the context of the quantum tropical vertex is made.} in $X$. To make clear why the relative invariants relate to the open counts, consider the following. Take a holomorphic disk in $X$ asymptotic to a non-compact leg with winding profile $\Vec{w}=(d)$. This is precisely an outer toric (Aganagic--Vafa) brane \cite{Aganagic_2002}. Project down to the base of $\pi : X\to \mathbb P^1$. One obtains a degree $d$ map of a disk into $\mathbb P^1$ with a single point mapping to $\infty$ with contact order $(d)$. The relative curves are hence thought of as open disks which get capped off towards the toric boundary at infinity, and instead of getting counts depending on the winding profile one obtains curve counts with specified contact profile. Therefore we compute the relative invariants $N^{\mathbb P^1 \setminus \infty}_{g,d}$ thought of as an algebraic definition of open GW counts.\par
We will now discuss the analogue of Kim's moduli space of stable log maps, the moduli space of stable relative maps into a parameterized $\mathbb P^1$.\par
\begin{dfn}[Stable relative map, {\cite[0.2.2]{faber2003relativemapstautologicalclasses}}]
    Let $g, n \in \mathbb Z_{\geq 0}$ be the genus and number of marked points of a source curve. Furthermore, let $\mu ^1, \dots, \mu ^m$ be $m$ partitions of $d \in \mathbb Z_{\geq 1}$. A stable relative map to a parameterized\footnote{We restrict the discussion to parameterized $\mathbb P^1$, indicated by the dagger superscript.} $\mathbb P^1$ 
    \begin{equation}
        \left( (C; p_1, \dots, p_n; \{Q_1\}, \dots,\{ Q_m\} ) \overset{f}{\longrightarrow} (T; q_1, \dots, q_m) \overset{\epsilon}{\to} \mathbb P^1\right) \in \overline {\mathcal M}^\dagger_{g,n}(\mu ^1, \dots , \mu^m)\,
    \end{equation}
    consists of the following data
    \begin{enumerate}
        \item A projective, connected, nodal curve $C$ of genus $g$ with $n + \sum_{i=1}^m{l (\mu^i)}$ distinct smooth marked points $\{ p_1, \dots, p_n\} \cup \bigcup_{i=1}^m{\{Q_i\}}$, where the set $\{ Q_i\} $ consists of $\vert \{Q_i\} \vert = l(\mu^i)$ many points;
        \item A projective, connected, rational nodal curve $T$ with $m$ smooth markings $q_i$. In other words, the datum of a tree with marked points strictly on the edges;
        \item A structure map $\epsilon : T \to \mathbb P^1$, which is an isomorphism $\epsilon \vert_ P: T \overset{\sim}{\to} \mathbb P^1$  for a unique rational component $\mathbb P^1 \simeq P \subset T$, and contracts all other components $\epsilon: T \setminus P \to \mathrm{pt} \in \mathbb P^1$;
        \end{enumerate}
        such that the following conditions are satisfied:
        \begin{itemize}
        \item All extremal components of $T\setminus P$ are marked by a $q_i$ at least once;
                    \item Matching branchings with neither markings nor contracted components of $C$ are lying over the nodes of the tree $T$;
        \item The map $f$ has the property that over a given smooth marked point $q_i \subset T$, $f$ has profile $\mu^i$. Furthermore, $\{Q_i\}$ is a complete marking of the fiber $f^{-1}(q_i)$ with $l(\mu^i)$ distinct points;
        \item The group of curve automorphisms that respect the marking data commutes with $f$, $\epsilon$, and $\mathsf{id} : \mathbb P^1 \to \mathbb P^1$, and is finite (Deligne-Mumford stability condition).
    \end{itemize}
\end{dfn}
\begin{rem}
    There exist natural maps
    \begin{equation}
        \rho : \overline{\mathcal M}^\dagger _{g,n}(\mu^1, \dots, \mu^m) \to \overline{\mathcal M}_{g, n + \sum_i^m l(\mu^i)}\,
    \end{equation}
    for the stable genera $g \geq 2$. In the rational and elliptic cases, these moduli spaces have expected dimension zero.
\end{rem}

Of special interest to us will be the moduli space of relative stable maps into parameterized $\mathbb P^1$ with only one relative point, the point at $\infty$, equipped with maximal contact order $\mu^1 = (d)$, the one-part partition of the degree $d \in \mathbb Z_{\geq 1}$.

Following the philosophy of \cite{kontsevich1992intersection}, one then reduces the problem of the integral against the virtual fundamental class of the torus fixed locus, which we denote by $\overline{\mathcal M}^\dagger_{g,n} (\mu^1) ^{\mathbb C^\times}$, to a problem about enumerating weighted graphs, such that each vertex contributes factors corresponding to intersection numbers in the moduli space of stable curves and each edge contributes combinatorial (Hurwitz-theoretical) factors, see for example \cite{chiang1999localmirrorsymmetrycalculations} for a discussion on this for closed GW invariants.\par
Based on the in-depth discussion in \cite{bryan2006localgromovwittentheorycurves}, we use virtual torus equivariant localization, developed in \cite{graber1997localizationvirtualclasses}. Thus we have to identify the torus fixed points of $\overline{\mathcal M}_{g,n}^\dagger (\mu^1)$.\par 

\begin{dfn}[{\cite[1.3.3]{faber2003relativemapstautologicalclasses}}]
    Each element of $\overline{\mathcal M}^\dagger_{g,n} (\mu^1, \dots, \mu^m)$ which is fixed by the torus action, lifted from the natural $\mathbb C^\times$ action on $\mathbb P^1$, can be labeled with a graph consisting of the following data. First of all, all marked points, nodal singularities, ramification points and contracted components of $C$ must lie over the fixed point set of the torus action on $\mathbb P^1$. Furthermore, each irreducible component $D \subset C$ which maps dominantly\footnote{We call a component dominant (with respect to a branch map) if it hits a nonempty open set in the target.} onto $\mathbb P^1$ must be a Galois cover with full ramification over the two fixed points. Then a localization graph
    \begin{equation}
        \Gamma = \left(\mathsf V, \mathsf E, \mathsf N, \gamma, \pi, \deg \mathsf e_i, (R^1, \dots, R^m)\right)
    \end{equation}
    consists of the following data:
    \begin{enumerate}
        \item vertices $\mathsf v_i \in \mathsf V$ for each connected component of the fiber $(\epsilon \circ f) ^{-1} (p)$ for $p \in \left\{ [1:0], [0:1]\right\} \subset \mathbb P^1$ a torus fixed point;
        \item $\gamma(\mathsf v_i) \in \mathbb Z_{\geq 0}$ the arithmetic genus of the connected component;
        \item $\pi(\mathsf v_i)$  the fixed point in $\mathbb P^1$ for the connected component associated to $\mathsf v_i$;
        \item $\mathsf e_i \in \mathsf E$ is an edge of $\Gamma$ associated to the $i$th non-contracted irreducible component $D_i \subset C$;
        \item $\deg \mathsf e_i$  the degree of the Galois cover of $(\epsilon \circ f)\vert_{D_i}$ and called the degree of the edge $\mathsf e_i$;
        \item $\mathsf N$ is the set of all markings;
        \item  $R^i$ is a refinement of the partition $\mu^j$ given by
        \begin{enumerate}
            \item a choice of torus fixed point $s^j \in \{ \mathrm{[1:0],[0:1]}\} \in \mathbb P^1$
            \item a distribution of parts of $\mu^j$ to the vertices $\pi^{-1}(s^j)$ satisfying that (for the disconnected case) $\mu^j[i]$ gets distributed to $\Gamma_i$ and the sum of parts distributed to a vertex $\mathsf v_k$ is the sum of degrees $\deg \mathsf e_j$ of edges incident to $\mathsf v_k$.
        \end{enumerate}
    \end{enumerate}
\end{dfn}
\begin{rem}[Torus fixed points are labeled by $\Gamma$, {\cite[Section 5]{bryan2004curvescalabiyau3foldstopological}}]
    The set of localization graphs $\{\Gamma\}$ is isomorphic to the set of torus fixed loci of $\overline{\mathcal M}^\dagger_{g,n}(\mu^1)^{\mathbb C^\times}\subset\overline{\mathcal M}^\dagger_{g,n}(\mu^1)$.
\end{rem}\par
\begin{rem}
    For genus $g=0$, there is only one relative stable map fixed by torus action $\overline{\mathcal M}^\dagger_{0,n}(\mu^1)^{\mathbb C^\times}$, a single point. For $ g>0 $, the relevant fixed component is obtained from the embedding
    \begin{equation}
         \overline {\mathcal M}_{g,1}\hookrightarrow \mathcal M^{\dagger}_{g,n}(\mu^1)^{\mathbb C^{\times}} 
    \end{equation} 
    which sends a stable curve with a single marking $(C; p_1) \in \overline{\mathcal{M}}_{g,1} $ to a relative stable map, following the procedure of \cite[Section 4]{Li_2006}. Essentially, the marking is then replaced by a puncture to which one glues a punctured $\mathbb P^1$.
\end{rem}
It remains to identify the contributing graphs and evaluate the associated integrals over $\overline{\mathcal M}_{g,n}$. After localization on $\overline{\mathcal M}_{g,n}^\dagger (\mu^1)^{\mathbb C^\times}$ we end up with a sum over decorated graphs where each vertex will yield an integral over the moduli space of stable curves with tautological class insertion and each edge will yield a combinatorial factor depending on the contact profile to our divisor. \par
From \cite{faber2003relativemapstautologicalclasses} we know the following about the contributing graphs. 
\begin{rem}\label{graph that contributes}
    The only localization graph $\Gamma$ contributing to the relative map count is the one with two vertices $\mathsf v_1, \mathsf v_2$ connected by one edge $\mathsf e_1$, of edge degree $\deg \mathsf e_1 =d$. This is in line with the fact that we want relative invariants relative to one point.
    \begin{figure}[!ht]
\centering
\resizebox{0.3\textwidth}{!}{%
\begin{tikzpicture}
\tikzstyle{every node}=[font=\normalsize]
\draw  (8.75,9.5) circle (0.25cm) node {\normalsize $g$} ;
\draw (8.75,9.75) -- (8.75,11.75)node[pos=0.5,left, fill=white]{$d$};
\draw  (8.75,12) circle (0.25cm) node {\normalsize $0$} ;
\draw [->, >=Stealth] (9.5,10.75) -- (11.25,10.75);
\node [font=\normalsize] at (11.75,10.75) {$\mathbb{P}^1$};
\end{tikzpicture}
}%
\label{fig:graph mapping into p1}\caption{ Depicted is the only contributing graph $\Gamma$ consisting of two vertices $\mathsf v_1, \mathsf v_2$. The first vertex $\mathsf v_1$ which maps into the fixed point at $[1:0]$ is a genus $g$ vertex. The second vertex $\mathsf v_2$, which maps to $[0:1]$ is a degenerate genus $0$ vertex. The two vertices are connected by a single edge of degree $d$.}
\end{figure}\par

\end{rem}\par

With all ingredients at hand we may now compute the relative GW invariants for $\mathbb P^1 \setminus \infty$ with insertion $V$, defined as
\begin{equation}\label{relativeGW computation}
    N^{\mathbb P^1 \setminus \infty}_{g,d} = \int_{\left[ \overline{\mathcal M}^\dagger _{g,n}(\mu^1)\right]^{\mathsf{virt}}}{\mathsf  c_{top}(V)}= \sum_{\Gamma}\frac{1}{\vert \mathrm{Aut}_{\Gamma}\vert} \int_{\left[\overline{\mathcal M}_{\Gamma}\right]^{\mathsf{virt}}}{\frac{\iota^\ast \left(\mathsf c_{top}(V) \right)}{\mathsf e \left( \mathcal N\right)}}
\end{equation}
where from now on (we understand all integrals as $\mathbb C^\times$-equivariant integrals, see \cite{graber1997localizationvirtualclasses}) $\mathfrak t \in \mathbb Z $ is the weight of the torus action on the fibers, $V$ is the obstruction bundle, see \cite[Proposition 3.1]{Li_2006}, and $\overline{\mathcal M}_{\Gamma}$ is the auxiliary moduli space defined in \cite{faber2003relativemapstautologicalclasses}. 

\begin{rem}
Recall that our insertion is $V\coloneqq R^1 \pi_{\ast}\mathsf f^{\ast}\left(\mathcal O_{\mathbb P^1}\oplus \mathcal O_{\mathbb P^1}(-1) \right)$, where the maps $(\pi, \mathsf f)$ come from the universal curve $\mathcal C$ over $\overline{\mathcal M}^{\dagger}_{g,n}\left( (d)\right)$.
    Hence the invariants as defined above \[N^{\mathbb P^1 \setminus \infty }_{g,d} =  \deg_{\left[ \overline{\mathcal M}^{\dagger}_{g,n}\left( (d)\right)\right]^{\mathsf{virt}}}\left(\mathsf c_{top}(V) \cap \left[\overline{\mathcal M}^{\dagger}_{g,n}\left( (d)\right) \right]^{\mathsf{virt}}\right)\] are relative invariants of the quasi-projective threefold $\mathsf{Tot}_{\mathbb P^1}\left( \mathcal O_{\mathbb P^1}\oplus \mathcal O_{\mathbb P^1}(-1) \right)$ relative to the fiber over $\infty \subset \mathbb P^1$. 
\end{rem}

\begin{prop}
    The relative invariants $N^{\mathbb P^1 \setminus \infty}_{g,d}$ as defined above are
    \begin{equation}
    N^{\mathbb P^1 \setminus \infty}_{g,d} =(-1)^{d-1}d^{2g-2} b_g = \begin{cases}
    \frac{(-1)^{d-1}}{d^2},& g=0\\
        (-1)^{d-1} d^{2g-2} \frac{2^{2g-1} -1}{2^{2g-1}} \frac{\vert B_{2g}\vert}{(2g)!}, & g \in \mathbb Z_{\geq 1}
        \end{cases}.
\end{equation}
\end{prop}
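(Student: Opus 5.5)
We compute $N^{\mathbb P^1\setminus\infty}_{g,d}$ by virtual $\mathbb C^\times$-localization as set up in \eqref{relativeGW computation}. By Remark \ref{graph that contributes}, only one graph $\Gamma$ contributes. It has a genus $g$ vertex over $0=[1:0]$ and a degenerate genus $0$ vertex over $\infty$, joined by a single edge of degree $d$. Its automorphism group is $\mathbb Z/d$, coming from the Galois cover $z\mapsto z^d$, which gives the overall factor $1/d$. The fixed locus is then $\overline{\mathcal M}_{g,1}$ for $g\ge1$ and a point for $g=0$.

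\emph{Step 1 (assemble the integrand).} For the vertex over $0$ we use the normalization sequence along the edge. The deformation/obstruction of maps to $\mathbb P^1$ relative to $\infty$, i.e.\ of $f^\ast T_{\mathbb P^1}(-\log\infty)$, contributes $\Lambda^\vee_g(\mathfrak t)$ together with the edge weights $\prod_{k=1}^{d-1}(k\mathfrak t/d)$ and the node smoothing factor $1/(\mathfrak t/d-\psi_1)$. The target point $\infty$ is rigid in the relative theory with one relative point, so there is no rubber contribution. The insertion $\mathsf c_{top}(V)$ splits along the normalization sequence:
\begin{itemize}
\item the $\mathcal O$ summand, linearized with weight $0$ (or with the antidiagonal weight as in Remark's CY condition), gives $\lambda_g$-type factors $(-1)^g\lambda_g$ at the vertex;
\item the $\mathcal O(-1)$ summand, linearized to have weight $0$ at $\infty$ and $-\mathfrak t$ at $0$, gives $\Lambda^\vee_g(-\mathfrak t)$ at the vertex;
\item the edge factor of $\mathcal O(-1)$ is $H^1(\mathbb P^1,\mathcal O(-d))$ pulled back, with weights $\prod_{k=1}^{d-1}(-k\mathfrak t/d)$.
\end{itemize}
The edge products are designed to cancel up to a sign $(-1)^{d-1}$. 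Mumford's relation $\Lambda^\vee_g(\mathfrak t)\Lambda^\vee_g(-\mathfrak t)=(-1)^g\mathfrak t^{2g}$ collapses the two Hodge factors. The integrand then reduces to a constant times
\[
\frac{(-1)^{d-1}}{d}\,\mathfrak t^{\,\bullet}\int_{\overline{\mathcal M}_{g,1}}\frac{\lambda_g}{\tfrac{\mathfrak t}{d}-\psi_1}.
\]
Expanding the geometric series, only $\psi_1^{2g-2}\lambda_g$ survives by dimension. Its $d$-dependence is $d^{2g-1}$, which combines with the $1/d$ and the edge factors to give exactly $(-1)^{d-1}d^{2g-2}$, with all powers of $\mathfrak t$ cancelling, as they must since the answer is non-equivariant.

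\emph{Step 2 (Hodge integral).} I then invoke the $\lambda_g$-conjecture/Faber--Pandharipande result. In the form actually needed here, with the extra $\Lambda^\vee$ factors from the $\mathcal O(-1)$ part, this is the integral $\int_{\overline{\mathcal M}_{g,1}}\psi_1^{2g-2}\lambda_g$ combined with the generating series
\[
1+\sum_{g\ge1}t^{2g}\int_{\overline{\mathcal M}_{g,1}}\psi_1^{2g-2}\lambda_g=\frac{t/2}{\sin(t/2)}.
\]
The coefficient of $t^{2g}$ in $\frac{t/2}{\sin(t/2)}$ is exactly $\frac{2^{2g-1}-1}{2^{2g-1}}\frac{|B_{2g}|}{(2g)!}=b_g$. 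This matches the claimed formula and gives the $1/\sin$ shape of $F_{\rm NS}$ in \eqref{FNSgenfunction}. For $g=0$ the fixed locus is a point, and the edge computation alone gives $(-1)^{d-1}/d^2$, consistent with $b_0=1$ and $d^{-2}$.

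\emph{Main obstacle.} The hard part is the bookkeeping of weights and signs in Step 1. Specifically:
\begin{itemize}
\item choosing linearizations of $\mathcal O$ and $\mathcal O(-1)$ so that the relative obstruction theory and $V$ combine into exactly one $\lambda_g$ (rather than $\lambda_g\lambda_{g-1}$ or a $\Lambda^\vee$ triple);
\item checking that the relative point at $\infty$ contributes no rubber term, since the vertex over $\infty$ is degenerate;
\item tracking the sign $(-1)^{d-1}$ from the edge weights.
\end{itemize}
I would first fix the linearizations following Bryan--Pandharipande's computation of the level $(0,-1)$ cap, which is precisely this geometry. I would then read off the result from their formula for the cap, namely $\frac{(-1)^{d-1}}{d}\big(2\sin\frac{du}{2}\big)^{-1}$ up to normalization, and expand it in $u$ to get $d^{2g-2}b_g$ times the prefactor.
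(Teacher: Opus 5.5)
Your route is the paper's route: one localization graph with $|\mathrm{Aut}_\Gamma|=d$, fixed locus $\overline{\mathcal M}_{g,1}$, Mumford's relation, extraction of $\psi_1^{2g-2}\lambda_g$, and Faber--Pandharipande's series $\frac{x/2}{\sin(x/2)}$. Your Hodge and $V$-factors agree exactly with the paper's, since $(-1)^g\lambda_g\,\Lambda^\vee_g(-\mathfrak t)=\lambda_g(\mathfrak t^g+\lambda_1\mathfrak t^{g-1}+\dots+\lambda_g)$ and $\prod_{k=1}^{d-1}(-k\mathfrak t/d)=\frac{(d-1)!}{d^{d-1}}(-\mathfrak t)^{d-1}$. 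However, Step 1 as written does not produce a number. With your deformation edge factor $\prod_{k=1}^{d-1}(k\mathfrak t/d)$, the integrand is $\frac{(-1)^{d-1}}{d}\,\mathfrak t^{2g}\,\frac{\lambda_g}{\mathfrak t/d-\psi_1}$. Its degree $3g-2$ part integrates to $(-1)^{d-1}d^{2g-2}b_g\cdot\mathfrak t$. So the assertion that ``all powers of $\mathfrak t$ cancel, as they must'' does not follow from the factors you listed; it is exactly where the computation breaks.

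The missing factor is the top edge weight. On the edge, $f^\ast T_{\mathbb P^1}(-\log\infty)\cong\mathcal O(d)$ has sections of weights $k\mathfrak t/d$ for $k=0,\dots,d$. The edge component carries both the node and the relative marking, so only the weight-zero infinitesimal automorphism is removed. In the normalization sequence, $H^0(C_v,f^\ast T_{\mathbb P^1}(-\log\infty))=T_0$ cancels the node term $T_0$. Hence the moving deformations are $\prod_{k=1}^{d}(k\mathfrak t/d)=\frac{d!}{d^d}\mathfrak t^{d}$, which is the paper's factor $\frac{d^d}{d!}\mathfrak t^{-d}$ in $\mathsf e(\mathcal N)^{-1}$. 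The edge ratio is then $(-1)^{d-1}/\mathfrak t$ rather than $(-1)^{d-1}$. The integrand becomes $\frac{(-1)^{d-1}}{d}\mathfrak t^{2g-1}\frac{\lambda_g}{\mathfrak t/d-\psi_1}$, and the $\psi_1^{2g-2}$ term gives $(-1)^{d-1}d^{2g-2}b_g$ with no leftover $\mathfrak t$. A rank count catches this: since $\mathrm{vdim}=d+2g-1=\mathrm{rk}\,V$, the moving part of the virtual normal bundle over $\overline{\mathcal M}_{g,1}$ must have rank $d+1-g$, whereas yours has rank $d-g$. (For $g=0$ the bare edge has a two-dimensional automorphism group, so the product is $\prod_{k=2}^{d}$; this yields $(-1)^{d-1}/d^2$.)

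There are two smaller points. First, reducing the $\mathcal O$ summand to $(-1)^g\lambda_g$ requires the weight-zero linearization, not the ``antidiagonal'' alternative you mention. Second, the absence of rubber contributions does not follow from $\infty$ being rigid, since the target can expand there. Those fixed loci have to be shown to contribute zero (essentially because $V$ has weight zero over $\infty$); the paper imports this fact from Faber--Pandharipande.
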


\begin{proof}
Let $g>0$ to avoid the unstable case. From Remark \ref{graph that contributes} we know that
\begin{equation}
    \overline{\mathcal M}_{\Gamma} = \overline{\mathcal M}_{g,1} \times \{ \mathrm{pt.}\}
\end{equation}
as the vertex over $\infty$ is degenerate (valence one and genus zero). This justifies using the embedding $\iota$ in the virtual localization.
Then it is shown in \cite{Li_2006} (up to a sign error fixed in \cite{bryan2004curvescalabiyau3foldstopological}) that\footnote{The same term-by-term analysis can be done using the localization graph and the associated Feynman rules described in \cite{bryan2004curvescalabiyau3foldstopological}.}
\begin{equation}
\iota ^\ast \left(\mathsf c_{top}(V)\right) = \frac{(d-1)!}{d^{d-1}} \lambda_g (-\mathfrak t)^{d-1} \left(\mathfrak t^g + \lambda_1 \mathfrak t^{g-1} + \dots + \lambda_g \right),
\end{equation} and the normal bundle $\mathcal N$ coming from the virtual localization satisfies
\begin{equation}
    \mathsf e(\mathcal N)^{-1} =\frac{d^d }{d!}\mathfrak t^{-d} \left(\frac{1}{\mathfrak t/d - \psi_1} \right)\left(\mathfrak t^g + \lambda_1^\vee \mathfrak t^{g-1} + \dots + \lambda_g ^\vee \right),
\end{equation}
where $\mathsf e \left( \mathcal N\right)$ is the Euler class of the normal bundle of $\overline{\mathcal M}_{g,1}$ in $\overline{\mathcal M}^\dagger_{g,n}\left((d) \right)^{\mathbb C^\times}$. Furthermore, $\vert \mathrm{Aut}_\Gamma \vert = d$, which can be computed via the Galois group\footnote{Since the dominant component is a fully ramified degree-$ d $ Galois cover of $ \mathbb P^1 $, its deck transformation group is cyclic of order $ d $.} of the covering.\par

Using Mumford's relation \cite[Eq. 5.4]{Mumford1983}
\begin{equation}
    \mathsf c\left(\mathbb E \right) \mathsf c\left(\mathbb E^\vee \right) =1,
\end{equation}
we can evaluate 
\begin{equation}
    \left(\mathfrak t^g + \lambda_1 \mathfrak t^{g-1} + \dots + \lambda_g \right)\left(\mathfrak t^g + \lambda_1^\vee \mathfrak t^{g-1} + \dots + \lambda_g ^\vee \right)=\mathfrak t^{2g}.
\end{equation}

Therefore we get
\begin{equation}
    \sum_{\Gamma}\frac{1}{\vert \mathrm{Aut}_{\Gamma}\vert} \int_{\left[\overline{\mathcal M}_{\Gamma}\right]^{\mathsf{virt}}}{\frac{\iota^\ast \left(\mathsf c_{top}(V) \right)}{\mathsf e \left( \mathcal N\right)}}=\frac{1}{d} \int_{\left[ \overline{\mathcal M}_{g,1}\right]}(-1)^{d-1}\lambda_g \left(\frac{1}{\mathfrak t/d - \psi_1}\right)\mathfrak t^{2g-1}
\end{equation}
Using that 
\begin{equation}
    \dim_{\mathbb C}\overline{\mathcal M}_{g,1} = 3g-2
\end{equation}
we can simplify the integral, as only a class of degree $3g-2$ in the Chow ring will yield a non-trivial integral. Remember that $\lambda_g \in A^{g}\left(\overline{\mathcal M}_{g,1} \right)$.  The only non-trivial contribution is 
\begin{equation}
\begin{split}
    \frac{1}{d} \int_{\left[ \overline{\mathcal M}_{g,1}\right]}(-1)^{d-1}\lambda_g \left(\frac{1}{\mathfrak t/d - \psi_1}\right)\mathfrak t^{2g-1}= 
    (-1)^{d-1}d^{2g-2} \int_{\left[\overline{\mathcal M}_{g,1}\right]}{\psi_1^{2g-2} \lambda_g}
    \end{split}
\end{equation}
Hence we obtain
\begin{equation}\label{computation simplified for rel GW}
  N^{\mathbb P^1 \setminus \infty}_{g,d} = 
    (-1)^{d-1}d^{2g-2} \int_{\left[\overline{\mathcal M}_{g,1}\right]}{\psi_1^{2g-2} \lambda_g},
\end{equation}
The right-hand side of \eqref{computation simplified for rel GW} 
\begin{equation}\label{intersection numbers bg}
    b_g := \int_{\left[\overline{\mathcal M}_{g,1}\right]}{\psi_1^{2g-2} \lambda_g}, ~g >0
\end{equation}
is understood from the physics perspective, first determined in genus zero (the remaining and unstable case) by \cite{Aspinwall_1993}, yielding $b_0 =1$. We conclude with the result in \cite[Theorem 2]{Faber}
\begin{equation*}
    \sum_{g=0}^\infty b_g x^{2g} = \frac{x/2}{\sin (x/2)} \,
\end{equation*}
valid in all genera $g \in \mathbb Z_{\geq 0}$.
\end{proof}

\begin{rem}[Comment on the sign]
    There is an apparent discrepancy in \cite{Ooguri_2000,Li_2006} and \cite{bryan2004curvescalabiyau3foldstopological} concerning the sign $(-1)^{d-1}$ in the final expression for $N^{\mathbb P^1 \setminus \infty}_{g,d}$. From the physics perspective, one might choose the K\"ahler modulus of the threefold $X$ to come with a sign, i.e. $-Q =\exp(2 \pi i t)$, and then compute the invariants as done in \cite{Aganagic_2012} using the exact WKB method. If our K\"ahler parameter is chosen to be $Q = \exp(2 \pi i t)$ we reproduce the results given in \cite{Ooguri_2000} and \cite{Li_2006}.
\end{rem}
\subsection{Enumerative interpretation of NS limit}
From now on, denote by $F_{\mathrm{NS}}(\lambda_D, T_D)$ the free energy of the refined topological string on $X$ in the Nekrasov--Shatashvili limit \cite{NEKRASOV_2010}, given in \eqref{FNSgenfunction} where we use the notation:
\[T_D \coloneqq t_D - \frac{1}{2}\check{\lambda}_D\,.\]

\par
One can compute $F_{\mathrm{NS}}(\lambda_D, T_D)$ either by solving $q$-difference equations, see \cite{Aganagic_2012}, or thanks to the resurgent analysis discussed above. From the latter perspective, it is predicted  that the NS free energy naturally arises as a constituent of the non-perturbative function $\widetilde{F}^{con}_{\text{np}}(\lambda, t)$, which one can obtain by the procedure in \cite{Alim:2021mhp}.

The previous discussion about relative invariants of $\mathbb P^1\setminus \infty$ then leads us to the following theorem about the enumerative content of $F_{\mathrm{NS}}(\lambda_D,T_D)$ (and hence $\widetilde{F}^{con}_{\text{np}}(\lambda, t)$ via \eqref{Fnp-decomp}) in terms of relative curve counts for the resolved conifold.
\begin{thm}

Fix the notation $Q_T=\exp(2\pi iT)$ and $q'=\exp(i\lambda_D)$. 
Then the Nekrasov--Shatashvili free energy of the resolved conifold satisfies
\begin{equation}
F_{\mathrm{NS}}(\lambda_D,T) = \sum_{g\geq 0} \sum_{d\geq 1} \frac{1}{d} N^{\mathbb{P}^1\setminus\infty}_{g,d} (-Q_T)^d \lambda_D^{2g-1}.
\end{equation}
In particular, evaluating at the shifted dual Kähler parameter $T=T_D$, the relative Gromov--Witten invariants of $\mathbb{P}^1$ with maximal tangency to the divisor $\infty$ give the Nekrasov--Shatashvili contribution appearing in the dual expansion of $\widetilde F^{con}_{\mathrm{np}}(\lambda,t)$.

Equivalently, after the GV-type resummation
one obtains
\begin{equation}
F^{GV}_{\mathrm{NS}}(q',T) = -i \sum_{k\geq 1}
\frac{Q_T^k}{k^2(q'^{k/2}-q'^{-k/2})}.
\end{equation}
Consequently, we obtain for the Poincar\'e polynomials of the moduli space of (degree $d$) Gieseker semistable sheaves on $\mathbb P^1$
\begin{equation}
\Omega_d^{\mathbb{P}^1}(q'^{1/2})=\delta_{d,1}.
\end{equation}
\end{thm}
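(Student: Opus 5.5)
The plan is a direct generating-function computation: insert the closed formula for $N^{\mathbb P^1\setminus\infty}_{g,d}$ from the preceding Proposition into the right-hand side, perform the genus sum with Faber's generating function $\sum_{g\ge0} b_g x^{2g} = \frac{x/2}{\sin(x/2)}$ (taking $b_0=1$ for the unstable genus-zero term), and compare with the closed form \eqref{FNSgenfunction} of $F_{\mathrm{NS}}$. Everything is to be read as formal power series in $Q_T$ whose coefficients are Laurent series in $\lambda_D$. The statement at $T=T_D$ then follows by substituting into \eqref{GVNS}, since it concerns only the first summand of \eqref{derivativeFNS}.

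Concretely, fix $d\ge1$. The $d$-th coefficient of the right-hand side is
\begin{equation*}
\frac{1}{d}(-1)^{d-1}(-1)^d Q_T^d\, d^{-2}\lambda_D^{-1}\sum_{g\ge0} b_g (d\lambda_D)^{2g}
= -\frac{Q_T^d}{d^3\lambda_D}\,\frac{d\lambda_D/2}{\sin(d\lambda_D/2)}
= -\frac{1}{2}\,\frac{Q_T^d}{d^2\sin(d\lambda_D/2)} .
\end{equation*}
Here the sign $(-1)^{d-1}$ in $N^{\mathbb P^1\setminus\infty}_{g,d}$ is cancelled by the $(-1)^d$ coming from $(-Q_T)^d$, leaving an overall $-1$. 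The $d^{2g-2}$ combines with $\lambda_D^{2g-1}$ to give $(d\lambda_D)^{2g}$ times $d^{-2}\lambda_D^{-1}$. Summing over $d$ reproduces exactly $F_{\mathrm{NS}}(\lambda_D,T)=-\frac12\sum_k Q_T^k/(k^2\sin(k\lambda_D/2))$ from \eqref{FNSgenfunction}, with $\epsilon=\lambda_D$ and $t=T$. This proves the first identity.

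For the GV-type form, I would write $q'=e^{i\lambda_D}$, so that $\sin(k\lambda_D/2)=(q'^{k/2}-q'^{-k/2})/(2i)$, and hence $-\frac{1}{2\sin(k\lambda_D/2)} = -\frac{i}{q'^{k/2}-q'^{-k/2}}$. This gives the stated expression for $F^{GV}_{\mathrm{NS}}(q',T)$ immediately.

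For the final claim, I would match this with the general refined GV/sheaf-count expansion $\sum_{d}\sum_{l}\frac{1}{l^2}\frac{\Omega_d(q^{1/2})}{q^{l/2}-q^{-l/2}}Q^{ld}$, as in the Bousseau and Brini--Schüler correspondence recalled in the introduction. Our series has only single-cover terms $Q_T^k$ weighted by $1/k^2$, so the coefficient of $Q^{ld}$ with $d\ge2$ must vanish and $\Omega_1=1$, up to the overall normalization. Since the family $\{(q^{l/2}-q^{-l/2})^{-1}Q^{ld}\}$ determines the $\Omega_d$ uniquely, by induction on the $Q$-degree, this forces $\Omega^{\mathbb P^1}_d=\delta_{d,1}$, consistent with the Brini--Schüler result.

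The main obstacle I expect is not the algebra but the convention bookkeeping. The prefactor $\pm i$ depends on $q=e^{-i\epsilon}$ versus $q'=e^{i\lambda_D}$, and the sign in $(-Q_T)^d$ depends on the Kähler-parameter sign discussed in the Remark on the sign. These must be fixed consistently so that the normalization yields $\Omega_1=+1$ rather than $-1$. I would fix them by checking the $d=1$, $g=0$ term against the Aspinwall--Morrison value $b_0=1$, and by noting that $\Omega$ is insensitive to the orientation change $q\mapsto q^{-1}$ for a constant polynomial.
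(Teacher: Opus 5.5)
Your proposal is correct and is essentially the paper's own proof. Both substitute $N^{\mathbb P^1\setminus\infty}_{g,d}=(-1)^{d-1}d^{2g-2}b_g$ and resum the genus series with $\sum_g b_g x^{2g}=\frac{x/2}{\sin(x/2)}$, giving $-\frac12\sum_d Q_T^d/(d^2\sin(d\lambda_D/2))$, which is \eqref{FNSgenfunction} at $\epsilon=\lambda_D$. Both then convert via $2i\sin(k\lambda_D/2)=q'^{k/2}-q'^{-k/2}$, read off $\Omega_d=\delta_{d,1}$ from the GV-type expansion, and use $Q_{T_D}=Q'q'^{-1/2}$ to match the term in \eqref{Fnp-decomp}.

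Your induction on $Q$-degree for the uniqueness of the $\Omega_d$ is a small addition the paper leaves implicit. The expansion the paper compares against carries $\Omega_d(q'^{k/2})$ rather than your $\Omega_d(q^{1/2})$, but this does not affect the conclusion, since $\Omega_1=1$ is constant.
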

The above theorem thus establishes that the analytic function $\widetilde{F}^{con}_{\text{np}}(\lambda,t)$ \cite{Alim:2021mhp, 2023Alim}, which we obtain from Borel--Laplace transformation of the closed GW generating function, has strong coupling asymptotics which can be given in terms of relative GW invariants of $\mathbb P^1 \setminus \infty$.

The non-perturbative function has strong-coupling asymptotics recovering open or relative curve counts, and weak-coupling asymptotics recovering closed curve counts. It is interesting that the global object contains information of both curve counting theories. 

We note that both the $\lambda_D$-expansion (GW-type asymptotic series) as well as the $q'$-expansion (GV-type asymptotic series) can also be computed via the exact WKB method, see \cite{Aganagic_2012, Huang_2015, 2023Alim}.
\begin{proof}

Starting from the known \eqref{GVNS} non-perturbative free energy $\widetilde{F}^{con}_{\text{np}}(\lambda,t)$, we extract the Nekrasov--Shatashvili free energy of the topological string on the resolved conifold, and find that it can be written in terms of relative GW invariants. 

Recall from the localization computation that the relative invariants of $\mathbb{P}^1$ with maximal tangency at $\infty$ are
\begin{equation}
    N^{\mathbb{P}^1\setminus\infty}_{g,d} =(-1)^{d-1}d^{2g-2}b_g.
\end{equation}
Here, for $g\geq 1$,
\begin{equation}
b_g\coloneqq \int_{\left[\overline{ \mathcal M}_{g,1} \right]}\psi_1^{2g-2}\lambda_g =
\frac{2^{2g-1}-1}{2^{2g-1}}\frac{|B_{2g}|}{(2g)!},
\end{equation}
and for $g=0$ we have $b_0=1$. The Hodge integrals $b_g$ are encoded by the generating series
\begin{equation}\label{eq:generatingfunc for bg}
    \sum_{g\geq 0}b_g x^{2g} = \frac{x/2}{\sin(x/2)}.
\end{equation}

Now we organize the relative invariants into the  generating function
\begin{equation}
f_{rel}\left(\lambda_D, T \right) \coloneqq\sum_{g\geq 0}
\sum_{d\geq 1}
\frac{1}{d}
N^{\mathbb{P}^1\setminus\infty}_{g,d}
(-Q_T)^d
\lambda_D^{2g-1}\,.
\end{equation}
Substituting the expression for $N^{\mathbb{P}^1\setminus\infty}_{g,d}$ gives
\begin{align}
f_{\mathrm{rel}}(\lambda_D,T)
&=
\sum_{g\geq 0}
\sum_{d\geq 1}
\frac{1}{d}
(-1)^{d-1}d^{2g-2}b_g
(-Q_T)^d
\lambda_D^{2g-1} \\
&=
-\sum_{d\geq 1}
\frac{Q_T^d}{d^3\lambda_D}
\sum_{g\geq 0}b_g(d\lambda_D)^{2g}.
\end{align}
Using the generating series \eqref{eq:generatingfunc for bg} for $b_g$, we obtain
\begin{align}
    f_{rel}(\lambda_D,T)
    &= -\sum_{d\geq 1} \frac{Q_T^d}{d^3\lambda_D} \frac{d\lambda_D/2}{\sin(d\lambda_D/2)} \\
    &= -\frac{1}{2} \sum_{d\geq 1} \frac{Q_T^d}{d^2\sin(d\lambda_D/2)}.
\end{align}
But this is the Nekrasov--Shatashvili free energy of the resolved conifold \eqref{FNSgenfunction}, with K\"ahler parameter $T$ and string coupling the dual coupling $\lambda_D$:
\begin{equation}
    F_{\mathrm{NS}}(\lambda_D,T) = f_{rel}(\lambda_D, T)\, .
\end{equation}
Therefore
\begin{equation}
F_{\mathrm{NS}}(\lambda_D,T)
=
\sum_{g\geq 0}
\sum_{d\geq 1}
\frac{1}{d}
N^{\mathbb{P}^1\setminus\infty}_{g,d}
(-Q_T)^d
\lambda_D^{2g-1}.
\end{equation}

We now rewrite this in GV-type form.
We use 
\begin{equation}
    \frac{1}{2i \sin(k\lambda_D/2)} = \frac{1}{q'^{k/2}-q'^{-k/2}}\,.
\end{equation}
and hence get
\begin{equation}
    F_{\mathrm{NS}}(\lambda_D,T) = -i \sum_{k\geq 1}
\frac{Q_T^k}{k^2(q'^{k/2}-q'^{-k/2})}.
\end{equation}

Comparing this expression with the general GV-type expansion
\begin{equation}
    F^{\mathrm{GV}}_{\mathrm{NS}}(q',T) = -i \sum_{k\geq 1} \sum_{d\geq 1} \frac{1}{k^2} \frac{\Omega_d^{\mathbb{P}^1}(q'^{k/2})} {q'^{k/2}-q'^{-k/2}} Q_T^{dk},
\end{equation}
we see that the only contribution is the degree-one contribution. Therefore we conclude
\begin{equation}
    \Omega_d^{\mathbb{P}^1}(q'^{1/2}) = \delta_{d,1}.
\end{equation}

Finally, in \eqref{Fnp-decomp}, the Nekrasov--Shatashvili free energy is evaluated at the shifted dual Kähler parameter
\begin{equation}
T=T_D \longrightarrow F_{\mathrm{NS}}^{GV} \left(q', T_D \right) = i \sum_{m=1}^{\infty}\frac{Q'^m}{m^2\left(1- q'^m\right)} \,.
\end{equation}
Thus the relative generating function above gives the enumerative content of the Nekrasov--Shatashvili contribution to the dual, or strong-coupling, asymptotic regime of $\widetilde F^{\mathrm{con}}_{\mathrm{np}}(\lambda,t)$.

\end{proof}

\begin{rem}
    A physical interpretation of the result is the following. From the discussion in \cite[Section 6.2]{Aganagic_2012} we argue that the enumeration problem there coincides with relative GW theory of $\mathbb P^1 \setminus \infty$ by the discussion of Section  \ref{relativeGWsubsection} and thus the computation in \cite{Li_2006}, since we are computing Aganagic--Vafa brane \cite{aganagic2000mirrorsymmetrydbranescounting} invariants. The outer toric brane we describe in Section \ref{relativeGWsubsection} seen as a disk mapping to $\mathbb P^1 \setminus \infty$ and a component mapping with specified profile $(d)$ to $\infty \subset \mathbb P^1$ is described by relative GW on parameterized $\mathbb P^1$ with maximal tangency of degree $d \in \mathbb Z_{\geq 1}$ to the point $\infty$. 
\end{rem}

\section{Conclusion and outlook}\label{sec:outlook}
The analytic function $\widetilde{F}^{con}_{\text{np}}(\lambda,t)$ which can be computed from the closed GW generating function consists of two parts: the closed GW generating function and a derivative of the NS free energy after a change of variable. We showed that the NS free energy $F_{NS}(\lambda_D, T_D)$  of the topological string on the conifold is the generating function of the relative GW invariants $N^{\mathbb P^1 \setminus \infty}_{g,d}$. We summarize this interplay between a priori very different curve counting theories in the following schematic diagram:
\[\begin{tikzcd}
	& {\widetilde{F}^{con}_{\text{np}}(\lambda,t)} \\
	{\sim F_{\mathrm{GV}} (\lambda, t)} && {\sim \frac{1}{2\pi}\partial_ \lambda \left(\lambda \, F_{\text{NS}} \left(\lambda_D,T_D\right)\right)}
	\arrow["{\lambda \sim 0}", from=1-2, to=2-1]
	\arrow["{\lambda_D \sim 0}"', from=1-2, to=2-3]
\end{tikzcd}\]
where by tilde we mean the asymptotic expansion in the string coupling variable $\lambda$ and $\lambda_D = \frac{4 \pi^2}{\lambda}$.
From a physics perspective this was expected as described in \cite{Alim:2021mhp}.

It is of interest to study $\widetilde{F}^{\mathrm{CY3}}_{\text{np}}(\lambda,t)$ proposed in \cite{alim2024nonperturbativetopologicalstringsresurgence} for more general local CY threefold geometries, arising from resurgent analysis of the topological string free energy as an asymptotic series in $\lambda$ to obtain and compare refined BPS numbers. For example, one could study the enumerative content of the large-$\lambda$ asymptotic series of $\widetilde{F}^{K_{\mathbb P^2}}_{\text{np}}(\lambda,t)$ and compare to the results computed in  \cite[Section 4.3]{Huang_2015}, to obtain $N^{\mathbb P^2 \setminus E}_{g,d}$ or compare with \cite[Section 6]{bousseau2020scatteringdiagramsstabilityconditions} to obtain $\Omega_d \left(q^{k/2} \right) \in \mathbb Z\left(q^{1/2}\right)$, the GV-type resummed invariants. \par
Another interesting avenue to pursue would be to understand the $q$-difference ($q$-Painlev\'e)  equation discussed in \cite{bonelli2018quantumcurvesqdeformedpainleve} fulfilled by the analytic function $\widetilde{F}^{K_{\mathbb P^1 \times \mathbb P^1}}_{\text{np}}(\lambda,t)$ as well as to find a $q$-difference equation for which $F_{\mathrm{NS}}(\lambda_D, T)$ is a solution. While this was discussed in \cite{2023Alim} for the resolved conifold,  it would be interesting to find similar relations for other (local) CY threefolds. \par
Lastly, it would be interesting to understand to what extent knowledge about finite generation and (quasi-) modularity transfers from the closed GW generating function to the log/relative GW generating function, given that a single function $\widetilde{F}^{\mathrm{CY3}}_{\text{np}}(\lambda,t)$  seems to govern both closed- and relative curve counts and is notably computed from the closed free energy alone.\par


\begin{thebibliography}{CDLOGP98}

\bibitem[Abr08]{AbramovichLectures}
Dan Abramovich.
\newblock {Lectures on {G}romov-{W}itten invariants of orbifolds}.
\newblock In {\em Enumerative invariants in algebraic geometry and string theory}, volume 1947 of {\em Lecture Notes in Math.}, pages 1--48. Springer, Berlin, 2008.

\bibitem[AC10]{chen2010logarithmic}
Dan Abramovich and Qile Chen.
\newblock Logarithmic structure for stable maps relative to simple normal crossing divisor.
\newblock Available at \url{https://www.math.brown.edu/dabramov/LOGGEOM/Relative_To_Simple_Normal_Crossing.pdf}, 2010.

\bibitem[ACD{\etalchar{+}}12]{Aganagic_2012}
Mina Aganagic, Miranda C.~N. Cheng, Robbert Dijkgraaf, Daniel Krefl, and Cumrun Vafa.
\newblock Quantum geometry of refined topological strings.
\newblock {\em Journal of High Energy Physics}, 2012(11), November 2012.

\bibitem[ACG{\etalchar{+}}10]{abramovich2010logarithmic}
Dan Abramovich, Qile Chen, Danny Gillam, Yuhao Huang, Martin Olsson, Matthew Satriano, and Shenghao Sun.
\newblock Logarithmic geometry and moduli.
\newblock {\em arXiv preprint arXiv:1006.5870}, 2010.

\bibitem[ADLM10]{Alim:2008kp}
Murad Alim, Jean Dominique~L{\"a}nge, and Peter Mayr.
\newblock {Global properties of topological string amplitudes and orbifold invariants}.
\newblock {\em Journal of High Energy Physics}, 2010(3):1--30, 2010.

\bibitem[AHT23]{2023Alim}
Murad Alim, Lotte Hollands, and Iván Tulli.
\newblock {Quantum Curves, Resurgence and Exact WKB}.
\newblock {\em Symmetry, Integrability and Geometry: Methods and Applications}, March 2023.

\bibitem[AKV02]{Aganagic_2002}
Mina Aganagic, Albrecht Klemm, and Cumrun Vafa.
\newblock Disk instantons, mirror symmetry and the duality web.
\newblock {\em Zeitschrift für Naturforschung A}, 57(1–2):1–28, February 2002.

\bibitem[AL07]{Alim:2007qj}
Murad Alim and Jean~Dominique L{\"a}nge.
\newblock Polynomial structure of the (open) topological string partition function.
\newblock {\em Journal of High Energy Physics}, 2007(10):045--045, 2007.

\bibitem[Ali23]{alim2020difference}
Murad Alim.
\newblock {Difference equation for the Gromov-Witten potential of the resolved conifold}.
\newblock {\em Journal of Geometry and Physics}, 183:104688, 2023.

\bibitem[Ali24]{alim2024nonperturbativetopologicalstringsresurgence}
Murad Alim.
\newblock {Non-perturbative topological strings from resurgence}.
\newblock {\em arXiv preprint arXiv:2406.17852}, 2024.

\bibitem[Ali25]{Alim:2021ukq}
Murad Alim.
\newblock {Intrinsic non-perturbative topological strings}.
\newblock {\em Adv. Theor. Math. Phys.}, 29(5):1365--1406, 2025.

\bibitem[AM93]{Aspinwall_1993}
Paul~S. Aspinwall and David~R. Morrison.
\newblock Topological field theory and rational curves.
\newblock {\em Communications in Mathematical Physics}, 151(2):245–262, January 1993.

\bibitem[AMW14]{abramovich2013comparisontheoremsgromovwitteninvariants}
Dan Abramovich, Steffen Marcus, and Jonathan Wise.
\newblock {Comparison theorems for Gromov--Witten invariants of smooth pairs and of degenerations}.
\newblock In {\em Annales de l'Institut Fourier}, volume~64, pages 1611--1667, 2014.

\bibitem[AS22]{Alim:2021lld}
Murad Alim and Arpan Saha.
\newblock {On the integrable hierarchy for the resolved conifold}.
\newblock {\em Bulletin of the London Mathematical Society}, 54(5):2014--2031, 2022.

\bibitem[ASTT23]{Alim:2021mhp}
Murad Alim, Arpan Saha, Joerg Teschner, and Iv\'an Tulli.
\newblock {Mathematical Structures of Non-perturbative Topological String Theory: From GW to DT Invariants}.
\newblock {\em Commun. Math. Phys.}, 399(2):1039--1101, 2023.

\bibitem[AV00]{aganagic2000mirrorsymmetrydbranescounting}
Mina Aganagic and Cumrun Vafa.
\newblock {Mirror symmetry, D-branes and counting holomorphic discs}.
\newblock {\em arXiv preprint hep-th/0012041}, 2000.

\bibitem[BBvG24]{bousseau2024stable}
Pierrick Bousseau, Andrea Brini, and Michel van Garrel.
\newblock Stable maps to looijenga pairs.
\newblock {\em Geometry \& Topology}, 28(1):393--496, 2024.

\bibitem[BCOV94]{Bershadsky:1993cx}
Michael Bershadsky, Sergio Cecotti, Hirosi Ooguri, and Cumrun Vafa.
\newblock {Kodaira-Spencer theory of gravity and exact results for quantum string amplitudes}.
\newblock {\em Communications in Mathematical Physics}, 165(2):311--427, 1994.

\bibitem[BFGW21]{Bousseau_2021}
Pierrick Bousseau, Honglu Fan, Shuai Guo, and Longting Wu.
\newblock {Holomorphic anomaly equation for $(\mathbb P^2, E)$ and the Nekrasov-Shatashvili limit of local $\mathbb{P}^2$}.
\newblock {\em Forum of Mathematics, Pi}, 9, 2021.

\bibitem[BGT19]{bonelli2018quantumcurvesqdeformedpainleve}
Giulio Bonelli, Alba Grassi, and Alessandro Tanzini.
\newblock Quantum curves and {$q$}-deformed {P}ainlev\'{e} equations.
\newblock {\em Lett. Math. Phys.}, 109(9):1961--2001, 2019.

\bibitem[Bou20]{Bousseau_2020}
Pierrick Bousseau.
\newblock The quantum tropical vertex.
\newblock {\em Geometry \& Topology}, 24(3):1297–1379, September 2020.

\bibitem[Bou22]{bousseau2020scatteringdiagramsstabilityconditions}
Pierrick Bousseau.
\newblock {Scattering diagrams, stability conditions, and coherent sheaves on $\mathbb{P}^{2}$}.
\newblock {\em Journal of Algebraic Geometry}, 31:593--686, 2022.

\bibitem[Bou23]{bousseau2020proofntakahashisconjecturemathbbp2e}
Pierrick Bousseau.
\newblock {A proof of N. Takahashi’s conjecture for $(\mathbb{P}^2, E)$ and a refined sheaves/Gromov--Witten correspondence}.
\newblock {\em Duke Mathematical Journal}, 172(15):2895--2955, 2023.

\bibitem[BP05]{bryan2004curvescalabiyau3foldstopological}
Jim Bryan and Rahul Pandharipande.
\newblock {Curves in Calabi-Yau 3-folds and Topological Quantum Field Theory}.
\newblock {\em Duke Mathematical Journal}, 126(2), 2005.

\bibitem[BP08]{bryan2006localgromovwittentheorycurves}
Jim Bryan and Rahul Pandharipande.
\newblock {The local Gromov-Witten theory of curves}.
\newblock {\em Journal of the American Mathematical Society}, 21(1):101--136, 2008.

\bibitem[Bri20]{BridgelandCon}
Tom Bridgeland.
\newblock {Riemann--Hilbert problems for the resolved conifold and non-perturbative partition functions}.
\newblock {\em Journal of Differential Geometry}, 115(3):395--435, 2020.

\bibitem[BS24]{brini2024refinedgromovwitteninvariants}
Andrea Brini and Yannik Sch\"uler.
\newblock {Refined Gromov-Witten invariants}.
\newblock {\em arXiv preprint arXiv:2410.00118}, 2024.

\bibitem[CDLOGP98]{Candelas:1990rm}
Philip Candelas, Xenia~C. De~La~Ossa, Paul~S. Green, and Linda Parkes.
\newblock {A Pair of Calabi-Yau manifolds as an exactly soluble superconformal theory}.
\newblock {\em AMS/IP Stud. Adv. Math.}, 9:31--95, 1998.

\bibitem[CK99]{coxkatz}
David~A. Cox and Sheldon Katz.
\newblock {\em Mirror symmetry and algebraic geometry}, volume~68.
\newblock American Mathematical Society Providence, RI, 1999.

\bibitem[CKYZ99]{chiang1999localmirrorsymmetrycalculations}
Ti-Ming Chiang, Albrecht Klemm, Shing-Tung Yau, and Eric Zaslow.
\newblock Local mirror symmetry: Calculations and interpretations.
\newblock {\em Advances in Theoretical and Mathematical Physics}, 3(3):495--565, 1999.

\bibitem[CR02]{ChenRuan}
Weimin Chen and Yongbin Ruan.
\newblock {Orbifold Gromov-Witten theory}.
\newblock {\em Contemporary Mathematics}, pages 25--85, 2002.

\bibitem[CV09]{cecotti2009bpswallcrossingtopological}
Sergio Cecotti and Cumrun Vafa.
\newblock {BPS wall crossing and topological strings}.
\newblock {\em arXiv preprint arXiv:0910.2615}, 2009.

\bibitem[{\'E}ca81]{ecalle1981resurgent}
Jean {\'E}calle.
\newblock {\em Les fonctions r{\'e}surgentes, Vol. I--III}.
\newblock Publications Math{\'e}matiques d'Orsay, Orsay, France, 1981.

\bibitem[FP00]{Faber}
Carel Faber and Rahul Pandharipande.
\newblock {Hodge integrals and Gromov-Witten theory}.
\newblock {\em Inventiones mathematicae}, 139(1):173--199, 2000.

\bibitem[FP05]{faber2003relativemapstautologicalclasses}
Carel Faber and Rahul Pandharipande.
\newblock {Relative maps and tautological classes.}
\newblock {\em Journal of the European Mathematical Society (EMS Publishing)}, 7(1), 2005.

\bibitem[GHK15]{gross2015mirrorsymmetrylogcalabiyau}
Mark Gross, Paul Hacking, and Sean Keel.
\newblock {Mirror symmetry for log Calabi-Yau surfaces I}.
\newblock {\em Publications Math{\'e}matiques de l'IHES}, 122:65--168, 2015.

\bibitem[GHM16]{Grassi:2014zfa}
Alba Grassi, Yasuyuki Hatsuda, and Marcos Mari{\~n}o.
\newblock {Topological Strings from Quantum Mechanics}.
\newblock {\em Annales Henri Poincare}, 17(11):3177--3235, 2016.

\bibitem[GHN23]{Grassi:2022zuk}
Alba Grassi, Qianyu Hao, and Andrew Neitzke.
\newblock {Exponential Networks, WKB and Topological String}.
\newblock {\em SIGMA}, 19:064, 2023.

\bibitem[Giv96]{GiventalMirror}
Alexander~B Givental.
\newblock {Equivariant Gromov-Witten invariants}.
\newblock {\em International Mathematics Research Notices}, 1996(13):613--663, 1996.

\bibitem[GK21]{Garoufalidis:2020pax}
Stavros Garoufalidis and Rinat Kashaev.
\newblock Resurgence of {F}addeev's quantum dilogarithm.
\newblock In {\em Topology and geometry---a collection of essays dedicated to {V}ladimir {G}. {T}uraev}, volume~33 of {\em IRMA Lect. Math. Theor. Phys.}, pages 257--271. Eur. Math. Soc., Z\"urich, [2021] \copyright 2021.

\bibitem[GP99]{graber1997localizationvirtualclasses}
Tom Graber and Rahul Pandharipande.
\newblock {Localization of virtual classes}.
\newblock {\em Inventiones mathematicae}, 135(2):487--518, 1999.

\bibitem[GRZZ26]{gräfnitz2025enumerativegeometryquantumperiods}
Tim Gr{\"a}fnitz, Helge Ruddat, Eric Zaslow, and Benjamin Zhou.
\newblock {Enumerative geometry of quantum periods}.
\newblock {\em Advances in Mathematics}, 499:111063, 2026.

\bibitem[GS06]{gross2006mirror}
Mark Gross and Bernd Siebert.
\newblock {Mirror symmetry via logarithmic degeneration data I}.
\newblock {\em Journal of Differential Geometry}, 72(2):169--338, 2006.

\bibitem[GS10]{gross2010mirror}
Mark Gross and Bernd Siebert.
\newblock {Mirror symmetry via logarithmic degeneration data, II}.
\newblock {\em Journal of Algebraic Geometry}, 19(4):679--780, 2010.

\bibitem[GS13]{gross2013logarithmic}
Mark Gross and Bernd Siebert.
\newblock {Logarithmic Gromov-Witten invariants}.
\newblock {\em Journal of the American Mathematical Society}, 26(2):451--510, 2013.

\bibitem[GS18]{gross2016intrinsic}
Mark Gross and Bernd Siebert.
\newblock {Intrinsic mirror symmetry and punctured Gromov-Witten invariants}.
\newblock In {\em Proceedings of Symposia in Pure Mathematics}, volume~97, 2018.

\bibitem[GV98a]{Gopakumar:1998ii}
Rajesh Gopakumar and Cumrun Vafa.
\newblock {M-Theory and Topological Strings--I}.
\newblock {\em arXiv preprint hep-th/9809187}, 1998.

\bibitem[GV98b]{Gopakumar:1998jq}
Rajesh Gopakumar and Cumrun Vafa.
\newblock {M-Theory and Topological Strings--II}.
\newblock {\em arXiv preprint hep-th/9812127}, 1998.

\bibitem[GV99]{GV}
Rajesh Gopakumar and Cumrun Vafa.
\newblock On the gauge theory/geometry correspondence.
\newblock {\em Advances in Theoretical and Mathematical Physics}, 3(5):1415--1443, 1999.

\bibitem[HK12]{huang2010directintegrationgeneralomega}
Min-Xin Huang and Albrecht Klemm.
\newblock {Direct integration for general $\Omega$ backgrounds}.
\newblock {\em Adv. Theor. Math. Phys.}, 16:805--849, 2012.

\bibitem[HKK{\etalchar{+}}03]{Hori:2003ic}
Kentaro Hori, Sheldon Katz, Albrecht Klemm, Rahul Pandharipande, Richard Thomas, Cumrun Vafa, Ravi Vakil, and Eric Zaslow.
\newblock {\em {Mirror symmetry}}, volume~1 of {\em Clay mathematics monographs}.
\newblock AMS, Providence, USA, 2003.

\bibitem[HKQ08]{Huang:2006hq}
Min-Xin Huang, Albrecht Klemm, and Seth Quackenbush.
\newblock {Topological string theory on compact Calabi--Yau: modularity and boundary conditions}.
\newblock In {\em Homological Mirror Symmetry: New Developments and Perspectives}, pages 1--58. Springer, 2008.

\bibitem[HKR08]{Haghighat:2008gw}
Babak Haghighat, Albrecht Klemm, and Marco Rauch.
\newblock {Integrability of the holomorphic anomaly equations}.
\newblock {\em JHEP}, 10:097, 2008.

\bibitem[HKRS15]{Huang_2015}
Min-Xin Huang, Albrecht Klemm, Jonas Reuter, and Marc Schiereck.
\newblock {Quantum geometry of del Pezzo surfaces in the Nekrasov-Shatashvili limit}.
\newblock {\em Journal of High Energy Physics}, 2015(2), February 2015.

\bibitem[HMMO14]{Hatsuda_2014}
Yasuyuki Hatsuda, Marcos Mariño, Sanefumi Moriyama, and Kazumi Okuyama.
\newblock Non-perturbative effects and the refined topological string.
\newblock {\em Journal of High Energy Physics}, 2014(9), September 2014.

\bibitem[HO15]{Hatsuda:2015owa}
Yasuyuki Hatsuda and Kazumi Okuyama.
\newblock {Resummations and Non-Perturbative Corrections}.
\newblock {\em JHEP}, 09:051, 2015.

\bibitem[IKT19]{Iwaki2}
Kohei Iwaki, Tatsuya Koike, and Yumiko Takei.
\newblock Voros coefficients for the hypergeometric differential equations and {E}ynard-{O}rantin's topological recursion: {P}art {II}: {F}or confluent family of hypergeometric equations.
\newblock {\em J. Integrable Syst.}, 4(1):xyz004, 46, 2019.

\bibitem[IKV09]{Iqbal_2009}
Amer Iqbal, Can Kozçaz, and Cumrun Vafa.
\newblock The refined topological vertex.
\newblock {\em Journal of High Energy Physics}, 2009(10):069–069, October 2009.

\bibitem[IP03]{ionel2003relative}
Eleny-Nicoleta Ionel and Thomas~H. Parker.
\newblock {Relative Gromov-Witten invariants}.
\newblock {\em Annals of Mathematics}, pages 45--96, 2003.

\bibitem[Kat89]{Kato1989LogStructures}
Kazuya Kato.
\newblock {Logarithmic Structures of Fontaine--Illusie}.
\newblock In {\em Algebraic Analysis, Geometry, and Number Theory}, pages 191--224. Johns Hopkins University Press, 1989.

\bibitem[KL06]{Katz_2006}
Sheldon Katz and Chiu-Chu~Melissa Liu.
\newblock {Enumerative geometry of stable maps with Lagrangian boundary conditions and multiple covers of the disc}.
\newblock In {\em The interaction of finite-type and Gromov--Witten invariants (BIRS 2003)}, page 1–47. Mathematical Sciences Publishers, April 2006.

\bibitem[Kon92]{kontsevich1992intersection}
Maxim Kontsevich.
\newblock {Intersection theory on the moduli space of curves and the matrix Airy function}.
\newblock {\em Communications in mathematical physics}, 147(1):1--23, 1992.

\bibitem[Li01]{li2001stable}
Jun Li.
\newblock Stable morphisms to singular schemes and relative stable morphisms.
\newblock {\em Journal of Differential Geometry}, 57(3):509--578, 2001.

\bibitem[LLLZ09]{Li_2009}
Jun Li, Chiu-Chu~Melissa Liu, Kefeng Liu, and Jian Zhou.
\newblock {A mathematical theory of the topological vertex}.
\newblock {\em Geometry \& Topology}, 13(1):527–621, January 2009.

\bibitem[LLY99]{LLY}
Bong~H. Lian, Kefeng Liu, and Shing-Tung Yau.
\newblock Mirror principle. {I} [{MR}1621573 (99e:14062)].
\newblock In {\em Surveys in differential geometry: differential geometry inspired by string theory}, volume~5 of {\em Surv. Differ. Geom.}, pages 405--454. Int. Press, Boston, MA, 1999.

\bibitem[LR01]{li1998symplectic}
An-Min Li and Yongbin Ruan.
\newblock {Symplectic surgery and Gromov-Witten invariants of Calabi-Yau 3-folds}.
\newblock {\em Inventiones mathematicae}, 145(1):151--218, 2001.

\bibitem[LS06]{Li_2006}
Jun Li and Yun~S. Song.
\newblock Open string instantons and relative stable morphisms.
\newblock In {\em The interaction of finite-type and Gromov--Witten invariants (BIRS 2003)}, page 49–72. Mathematical Sciences Publishers, April 2006.

\bibitem[Mar26]{Marino:2024tbx}
Marcos Mari{\~n}o.
\newblock {Les Houches lectures on non-perturbative topological strings}.
\newblock {\em SciPost Physics Lecture Notes}, page 112, 2026.

\bibitem[MNOP06a]{MNOP1}
Davesh Maulik, Nikita~A. Nekrasov, Andrei Okounkov, and Rahul Pandharipande.
\newblock Gromov-{W}itten theory and {D}onaldson-{T}homas theory. {I}.
\newblock {\em Compos. Math.}, 142(5):1263--1285, 2006.

\bibitem[MNOP06b]{MNOP2}
Davesh Maulik, Nikita~A. Nekrasov, Andrei Okounkov, and Rahul Pandharipande.
\newblock Gromov-{W}itten theory and {D}onaldson-{T}homas theory. {II}.
\newblock {\em Compos. Math.}, 142(5):1286--1304, 2006.

\bibitem[MS16]{SauzinLectures}
Claude Mitschi and David Sauzin.
\newblock {\em Divergent series, summability and resurgence. {I}}, volume 2153 of {\em Lecture Notes in Mathematics}.
\newblock Springer, [Cham], 2016.
\newblock Monodromy and resurgence, With a foreword by Jean-Pierre Ramis and a preface by \'{E}ric Delabaere, Mich\`ele Loday-Richaud, Claude Mitschi and David Sauzin.

\bibitem[MS23]{Maulik_2023}
Davesh Maulik and Junliang Shen.
\newblock {Cohomological Chi–independence for moduli of one-dimensional sheaves and moduli of Higgs bundles}.
\newblock {\em Geometry \& Topology}, 27(4):1539–1586, June 2023.

\bibitem[Mum83]{Mumford1983}
David Mumford.
\newblock {\em Towards an Enumerative Geometry of the Moduli Space of Curves}, pages 271--328.
\newblock Birkh{\"a}user Boston, Boston, MA, 1983.

\bibitem[NS10]{NEKRASOV_2010}
Nikita~A. Nekrasov and Samson~L. Shatashvili.
\newblock Quantization of integrable systems and four dimensional gauge theories.
\newblock In {\em XVIth International Congress on Mathematical Physics}. World Scientific, March 2010.

\bibitem[OV00]{Ooguri_2000}
Hirosi Ooguri and Cumrun Vafa.
\newblock Knot invariants and topological strings.
\newblock {\em Nuclear Physics B}, 577(3):419–438, June 2000.

\bibitem[PS10]{Pasquetti:2009jg}
Sara Pasquetti and Ricardo Schiappa.
\newblock {Borel and Stokes Nonperturbative Phenomena in Topological String Theory and c=1 Matrix Models}.
\newblock {\em Annales Henri Poincare}, 11:351--431, 2010.

\bibitem[Tem23]{logintroduction}
Michael Temkin.
\newblock Introduction to logarithmic geometry.
\newblock In {\em New Techniques in Resolution of Singularities}, pages 87--122. Springer, 2023.

\bibitem[Vak03]{vakil2003moduli}
Ravi Vakil.
\newblock The moduli space of curves and its tautological ring.
\newblock {\em Notices of the AMS}, 50(6):647--658, 2003.

\bibitem[vGGR19]{van2019local}
Michel van Garrel, Tom Graber, and Helge Ruddat.
\newblock Local gromov-witten invariants are log invariants.
\newblock {\em Advances in Mathematics}, 350:860--876, 2019.

\bibitem[Wit93]{Witten:1993ed}
Edward Witten.
\newblock {Quantum background independence in string theory}.
\newblock In {\em {Conference on Highlights of Particle and Condensed Matter Physics (SALAMFEST)}}, pages 0257--275, 6 1993.

\bibitem[YY04]{Yamaguchi:2004bt}
Satoshi Yamaguchi and Shing-Tung Yau.
\newblock {Topological string partition functions as polynomials}.
\newblock {\em Journal of High Energy Physics}, 2004(07):047--047, 2004.

\end{thebibliography}

\newcommand{\etalchar}[1]{$^{#1}$}

\end{document}